\providecommand{\U}[1]{\protect\rule{.1in}{.1in}}
\newtheorem{theorem}{Theorem}
\newtheorem{lemma}[theorem]{Lemma}
\newenvironment{proof}[1][Proof]{\textbf{#1.} }{\ \rule{0.5em}{0.5em}}
\begin{document}

\title{A Spectral Method for Parabolic Differential Equations}
\author{Kendall Atkinson\\Departments of Mathematics \& Computer Science \\The University of Iowa
\and Olaf Hansen\\Department of Mathematics \\California State University - San Marcos
\and David Chien\\Department of Mathematics \\California State University - San Marcos}
\maketitle

\begin{abstract}
We present a spectral method for parabolic partial differential equations with
zero Dirichlet boundary conditions. The region $\Omega$ for the problem is
assumed to be simply-connected and bounded, and its boundary is assumed to be
a smooth surface. An error analysis is given, showing that spectral
convergence is obtained for sufficiently smooth solution functions. Numerical
examples are given in both $\mathbb{R}^{2}$ and $\mathbb{R}^{3}$. 

\end{abstract}

\section{INTRODUCTION\label{sec_intro}}

Consider solving the parabolic partial differential equation%
\begin{equation}
\frac{\partial u\left(  s,t\right)  }{\partial t}=\sum_{k,\ell=1}^{d}%
\frac{\partial}{\partial s_{k}}\left(  a_{k,\ell}(s,t,u\left(  s,t\right)
)\frac{\partial u(s,t)}{\partial s_{\ell}}\right)  +f\left(  s,t,u\left(
s,t\right)  \right)  ,\quad\quad\label{e1}%
\end{equation}
for $s\in\Omega\subseteq\mathbb{R}^{d}$, $0<t\leq T$. The solution $u$ is
subject to the Dirichlet boundary condition
\begin{equation}
u(s,t)\equiv0,\quad\quad s\in\partial\Omega,\quad0<t\leq T\label{e2}%
\end{equation}
and to the initial condition%
\begin{equation}
u\left(  s,0\right)  =u_{0}\left(  s\right)  ,\quad\quad s\in\Omega.\label{e3}%
\end{equation}
The region $\Omega$ is open, bounded, and simply connected in $\mathbb{R}^{d}
$ for some $d\geq2$, and the boundary $\partial\Omega$ is assumed to be
several times continuously differentiable. This paper presents a spectral
method for solving this problem. The functions $a_{i,j}\left(  s,t,z\right)  $
and $f\left(  s,t,z\right)  $ are assumed to be continuous for $\left(
s,t,z\right)  \in\overline{\Omega}\times\left[  0,T\right]  \times\mathbb{R}
$. Additional assumptions are given later in the paper.\ These assumptions are
stronger than needed for the results we obtain, but they simplify the
presentation. In addition, we assume that there is a unique solution $u$ to
the problem (\ref{e1})-(\ref{e3}). For an introduction to the theory of
nonlinear parabolic problems using variational methods, see \cite[Chap.
30]{zeidler}.

We transform the above problem to one over the unit ball $\mathbb{B}_{d}$ in
$\mathbb{R}^{d}$, and then we use Galerkin's method with a suitably chosen
polynomial basis to approximate the solution $u$. This is similar in spirit to
earlier work in \cite{ach2008}, \cite{ah2010}, \cite{ahc2009}. This approach
reduces the problem to the solution of an inital value problem for a system of
ordinary differential equations, for which there is much excellent software.
The convergence analysis of the paper depends on the landmark paper of Douglas
and Dupont \cite{DD1970}. The methods of this paper also extend to having the
functions $a_{i,j}$ and $f$ depend on the first derivatives $\partial
u/\partial s_{j}$, although this is not considered here. For related books on
spectral methods for partial differential equations, see \cite{boyd}%
-\cite{cqhz2}, \cite{GottOrs}, \cite{guo1998}, \cite{ShenTang},
\cite{ShenTangWang}.

The spectral method is presented and analyzed in \S 2, implementation issues
are discussed in \S 3, and numerical examples in $\mathbb{R}^{2}$ and
$\mathbb{R}^{3}$ are given in \S 4.

\section{A spectral method\label{sec_method}}

We transform the problem (\ref{e1})-(\ref{e3}) to one over the unit ball
$\mathbb{B}_{d}$, and then we apply Galerkin's method using multivariate
polynomials as approximations of the solution. To transform a problem defined
on $\Omega$ to an equivalent problem defined on $\mathbb{B}_{d}$, we review
some \ ideas from \cite{ach2008} and \cite{ahc2009}, modifying them as
appropriate for this paper.

Assume the existence of a function%
\begin{equation}
\Phi:\overline{\mathbb{B}}_{d}\underset{onto}{\overset{1-1}{\longrightarrow}%
}\overline{\Omega}\label{e4}%
\end{equation}
with $\Phi$ a twice--differentiable mapping, and let $\Psi=\Phi^{-1}%
:\overline{\Omega}\underset{onto}{\overset{1-1}{\longrightarrow}}%
\overline{\mathbb{B}}_{d}$. \ For $v\in L^{2}\left(  \Omega\right)  $, let%
\begin{equation}
\widetilde{v}(x)=v\left(  \Phi\left(  x\right)  \right)  ,\quad\quad
x\in\overline{\mathbb{B}}_{d}\subseteq\mathbb{R}^{d}\label{e6}%
\end{equation}
and conversely,%
\begin{equation}
v(s)=\widetilde{v}\left(  \Psi\left(  s\right)  \right)  ,\quad\quad
s\in\overline{\Omega}\subseteq\mathbb{R}^{d}.\label{e7}%
\end{equation}
Assuming $v\in H^{1}\left(  \Omega\right)  $, we can show%
\[
\nabla_{x}\widetilde{v}\left(  x\right)  =J\left(  x\right)  ^{\text{T}}%
\nabla_{s}v\left(  s\right)  ,\quad\quad s=\Phi\left(  x\right)
\]
with $J\left(  x\right)  $ the Jacobian matrix for $\Phi$ over the unit ball
$\mathbb{B}_{d}$,%
\begin{equation}
J(x)\equiv\left(  D\Phi\right)  (x)=\left[  \frac{\partial\varphi_{i}%
(x)}{\partial x_{j}}\right]  _{i,j=1}^{d},\quad\quad x\in\overline{\mathbb{B}%
}_{d}.\label{e7b}%
\end{equation}
To use our method for problems over a region $\Omega$, it is necessary to know
explicitly the functions $\Phi$ and $J$. \ We assume%
\begin{equation}
\det J(x)\neq0,\quad\quad x\in\overline{\mathbb{B}}_{d}.\label{e8}%
\end{equation}
Similarly,%
\[
\nabla_{s}v(s)=K(s)^{\text{T}}\nabla_{x}\widetilde{v}(x),\quad\quad x=\Psi(s)
\]
with $K(s)$ the Jacobian matrix for $\Psi$ over $\Omega$. By differentiating
the identity
\[
\Psi\left(  \Phi\left(  x\right)  \right)  =x,\quad\quad x\in\overline
{\mathbb{B}}_{d}%
\]
we obtain%
\[
K\left(  \Phi\left(  x\right)  \right)  =J\left(  x\right)  ^{-1}.
\]
Assumptions about the differentiability of $\widetilde{v}\left(  x\right)  $
can be related back to assumptions on the differentiability of $v(s)$ and
$\Phi(x)$.

\begin{lemma}
\label{transform_lemma}If $\Phi\in C^{m}\left(  \overline{\mathbb{B}}%
_{d}\right)  $ and $v\in C^{k}\left(  \overline{\Omega}\right)  $, then
$\widetilde{v}\in C^{q}\left(  \overline{\mathbb{B}}_{d}\right)  $ with
$q=\min\left\{  k,m\right\}  $.
\end{lemma}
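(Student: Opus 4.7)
The plan is to prove the lemma by a straightforward induction on $q$ using the chain rule, since $\widetilde{v} = v \circ \Phi$ is simply a composition of two functions whose smoothness we control separately.

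First I would handle the base case $q=0$: if $k\ge 0$ and $m\ge 0$, then $\Phi$ is continuous on $\overline{\mathbb{B}}_d$ and $v$ is continuous on $\overline{\Omega}$, so $v\circ\Phi$ is continuous on $\overline{\mathbb{B}}_d$, giving $\widetilde v\in C^0(\overline{\mathbb{B}}_d)$. For the inductive step, I would assume the statement for $q-1$ and compute a single first partial derivative by the chain rule,
\[
\frac{\partial\widetilde v}{\partial x_j}(x) \;=\; \sum_{i=1}^{d}\frac{\partial v}{\partial s_i}\bigl(\Phi(x)\bigr)\,\frac{\partial\varphi_i}{\partial x_j}(x).
\]
Here $\partial v/\partial s_i\in C^{k-1}(\overline{\Omega})$ and $\partial\varphi_i/\partial x_j\in C^{m-1}(\overline{\mathbb{B}}_d)$. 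Applying the inductive hypothesis with $v$ replaced by $\partial v/\partial s_i$ (which is in $C^{k-1}$) and with the smoothness of $\Phi$ still being $m$ (more than enough, since we only need $C^{m-1}$), each composed factor $(\partial v/\partial s_i)\circ\Phi$ lies in $C^{\min\{k-1,\,m-1\}}=C^{q-1}(\overline{\mathbb{B}}_d)$. The factor $\partial\varphi_i/\partial x_j$ is in $C^{m-1}(\overline{\mathbb{B}}_d)\subseteq C^{q-1}(\overline{\mathbb{B}}_d)$. Thus each $\partial\widetilde v/\partial x_j$ is a sum of products of $C^{q-1}$ functions, hence itself in $C^{q-1}(\overline{\mathbb{B}}_d)$, and so $\widetilde v\in C^q(\overline{\mathbb{B}}_d)$.

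The step that requires the most care, but which is essentially bookkeeping, is verifying that the inductive hypothesis applies to the new functions $\partial v/\partial s_i$ with the shifted indices $k-1$ and $m-1$; this is where the formula $q=\min\{k,m\}$ drops out naturally, because each application of the chain rule costs one derivative from both $v$ and $\Phi$ simultaneously. An alternative approach, which avoids the induction entirely, would be to cite Faà di Bruno's formula for multivariate compositions directly: it expresses $\partial^{\alpha}(v\circ\Phi)$ as a finite sum of products of partial derivatives of $v$ of order at most $|\alpha|$ (evaluated at $\Phi(x)$) times partial derivatives of components of $\Phi$ of order at most $|\alpha|$, so continuity of all such derivatives for $|\alpha|\le q$ follows immediately from the hypotheses $v\in C^k$ and $\Phi\in C^m$ with $q\le\min\{k,m\}$.

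I do not expect a genuine obstacle here; the only subtlety is that we are working on closed sets $\overline{\mathbb{B}}_d$ and $\overline{\Omega}$, so one should briefly note that $C^k$ on a closure is understood in the usual sense of functions whose derivatives extend continuously up to the boundary, and that the chain rule still holds pointwise on $\overline{\mathbb{B}}_d$ because $\Phi(\overline{\mathbb{B}}_d)=\overline{\Omega}$ by the bijectivity assumption (\ref{e4}).
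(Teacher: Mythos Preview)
Your proposal is correct and takes essentially the same approach as the paper: the paper's proof is a single sentence stating that the result is ``straightforward using (\ref{e6}),'' i.e., straightforward from the composition formula $\widetilde v = v\circ\Phi$, which is exactly what your chain-rule induction (or the Fa\`a di Bruno alternative) makes explicit. You have simply filled in the details the authors left to the reader.
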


\begin{proof}
A proof is straightforward using (\ref{e6}).$\left.  {}\right.  $%
\hfill\medskip
\end{proof}

\noindent A converse statement can be made as regards $\widetilde{v}$, $v$,
and $\Psi$ in (\ref{e7}).

Often a mapping $\varphi$ is given from $\mathbb{S}^{d-1}$ onto $\partial
\Omega$, and it will not be clear as to how to extend the mapping to $\Phi$
satisfying (\ref{e4}) and (\ref{e8}). This is explored in \cite{ah2011} with
several methods given for constructing $\Phi$.

To obtain a space for approximating the solution $u$ of our problem, we
proceed as follows. Denote by $\Pi_{n}$ the space of polynomials in $d$
variables that are of degree $\leq n$: $p\in\Pi_{n}$ if it has the form%
\[
p(x)=\sum_{\left\vert i\right\vert \leq n}a_{i}x_{1}^{i_{1}}x_{2}^{i_{2}}\dots
x_{d}^{i_{d}}%
\]
with $i$ a multi--integer, $i=\left(  i_{1},\dots,i_{d}\right)  $, and
$\left\vert i\right\vert =i_{1}+\cdots+i_{d}$. Our approximation space with
respect to $\mathbb{B}_{d}$ is%
\begin{equation}
\widetilde{\mathcal{X}}_{n}=\left\{  \left(  1-\left\vert x\right\vert
^{2}\right)  p(x)\mid p\in\Pi_{n}\right\}  \subseteq H_{0}^{1}\left(
\mathbb{B}_{d}\right) \label{e9a}%
\end{equation}
With respect to $\Omega$, the approximating subspace is%
\begin{equation}
\mathcal{X}_{n}=\left\{  \psi\left(  s\right)  =\widetilde{\psi}\left(
\Psi\left(  s\right)  \right)  :\widetilde{\psi}\in\widetilde{\mathcal{X}}%
_{n}\right\}  \subseteq H_{0}^{1}\left(  \Omega\right) \label{e9b}%
\end{equation}
Let $N_{n}=\dim\mathcal{X}_{n}=\dim\widetilde{\mathcal{X}}_{n}=\dim\Pi_{n}$.
For $d=2$, $N_{n}=\left(  n+1\right)  \left(  n+2\right)  /2$.

\subsection{The approximation\label{approximation}}

We reformulate the parabolic problem (\ref{e1})-(\ref{e3}) as a variational
problem. Multiply (\ref{e1}) by an arbitrarily chosen $v\in H_{0}^{1}\left(
\Omega\right)  $ and perform integration by parts, obtaining%
\begin{equation}%
\begin{array}
[c]{r}%
\left(  \dfrac{\partial u\left(  \cdot,t\right)  }{\partial t},v\right)  =-%
%TCIMACRO{\dsum \limits_{i,j=1}^{d}}%
%BeginExpansion
{\displaystyle\sum\limits_{i,j=1}^{d}}
%EndExpansion%
%TCIMACRO{\dint _{\Omega}}%
%BeginExpansion
{\displaystyle\int_{\Omega}}
%EndExpansion
a_{i,j}\left(  s,t,u\left(  s,t\right)  \right)  \dfrac{\partial u\left(
s,t\right)  }{\partial s_{i}}\dfrac{\partial v\left(  s,t\right)  }{\partial
s_{j}}\,ds\medskip\\
+\left(  f\left(  \cdot,t,u\left(  \cdot,t\right)  \right)  ,v\right)
,\quad\quad v\in H_{0}^{1}\left(  \Omega\right)  ,\quad t\geq0.
\end{array}
\label{e10}%
\end{equation}
In this equation, $\left(  \cdot,\cdot\right)  $ denotes the usual inner
product for $L^{2}\left(  \Omega\right)  .$ \ \ Equation (\ref{e10}), together
with (\ref{e3}), is used to develop our approximation method.

We look for a solution of the form%
\begin{equation}
u_{n}\left(  s,t\right)  =\sum_{k=1}^{N_{n}}\alpha_{k}\left(  t\right)
\psi_{k}\left(  s\right) \label{e12}%
\end{equation}
with $\left\{  \psi_{1},\dots,\psi_{N}\right\}  $ a basis of $\mathcal{X}_{n}%
$. The coefficients $\left\{  \alpha_{1},\dots,\alpha_{N_{n}}\right\}  $
generally will vary with $n$, but we omit the explicit dependence to simplify
notation. Substitute this $u_{n}$ into (\ref{e10}) and let $v$ run through the
basis elements $\psi_{\ell}$. This results in the following system:%
\begin{equation}%
\begin{array}
[c]{l}%
%TCIMACRO{\dsum \limits_{k=1}^{N_{n}}}%
%BeginExpansion
{\displaystyle\sum\limits_{k=1}^{N_{n}}}
%EndExpansion
\alpha_{k}^{\prime}\left(  t\right)  \left(  \psi_{k},\psi_{\ell}\right)
\medskip\\
\quad\quad=-%
%TCIMACRO{\dsum \limits_{k=1}^{N_{n}}}%
%BeginExpansion
{\displaystyle\sum\limits_{k=1}^{N_{n}}}
%EndExpansion
\alpha_{k}\left(  t\right)
%TCIMACRO{\dsum \limits_{i,j=1}^{d}}%
%BeginExpansion
{\displaystyle\sum\limits_{i,j=1}^{d}}
%EndExpansion%
%TCIMACRO{\dint _{\Omega}}%
%BeginExpansion
{\displaystyle\int_{\Omega}}
%EndExpansion
a_{i,j}\left(  s,t,%
%TCIMACRO{\dsum \limits_{k=1}^{N_{n}}}%
%BeginExpansion
{\displaystyle\sum\limits_{k=1}^{N_{n}}}
%EndExpansion
\alpha_{k}\left(  t\right)  \psi_{k}\left(  s\right)  \right)  \dfrac
{\partial\psi_{k}\left(  s,t\right)  }{\partial s_{i}}\dfrac{\partial
\psi_{\ell}\left(  s,t\right)  }{\partial s_{j}}\,ds\medskip\\
\quad\quad\quad\quad+\left(  f\left(  \cdot,t,%
%TCIMACRO{\dsum \limits_{k=1}^{N_{n}}}%
%BeginExpansion
{\displaystyle\sum\limits_{k=1}^{N_{n}}}
%EndExpansion
\alpha_{k}\left(  t\right)  \psi_{k}\right)  ,\psi_{\ell}\right)  ,\quad
\ell=1,\dots,N_{n},\quad t\geq0
\end{array}
\label{e14}%
\end{equation}
This is a system of ordinary differential equations for the coefficients
$\alpha_{k}$, for $k=1,\dots,N_{n}$. For the initial conditions, calculate%
\begin{equation}
u_{0}\left(  s\right)  \approx u_{0,n}\left(  s\right)  \equiv\sum
_{k=1}^{N_{n}}\alpha_{k}^{(0)}\psi_{k}\left(  s\right) \label{e16}%
\end{equation}
by some means, and then use
\begin{equation}
\alpha_{k}\left(  0\right)  =\alpha_{k}^{(0)},\quad\quad k=1,\dots
,N_{n}.\label{e18}%
\end{equation}
The implementation of (\ref{e12})-(\ref{e18}) is discussed in
\S \ref{sec_implement}.

\subsection{Convergence analysis\label{con_analysis}}

Our error analysis of (\ref{e12})-(\ref{e18}) is based on Douglas and Dupont
\cite[Thm. 7.1]{DD1970}; and as in that paper, we assume the functions
$\left\{  a_{i,j}\right\}  $ and $f$ satisfy a number of properties.

\begin{enumerate}
\item[\textbf{A1}] As stated earlier, we assume the functions $a_{i,j}\left(
s,t,z\right)  $ and $f\left(  s,t,z\right)  $ are continuous for $\left(
s,t,z\right)  \in\overline{\Omega}\times\left[  0,T\right]  \times\mathbb{R}
$. Moreover, assume%
\[
\left\vert f\left(  s,t,r\right)  -f\left(  s,t,\rho\right)  \right\vert \leq
K\left\vert r-\rho\right\vert ,
\]
for all $\left(  s,t,r\right)  ,\left(  s,t,\rho\right)  \in\overline{\Omega
}\times\left[  0,T\right]  \times\mathbb{R}$, and%
\[
\left\vert a_{i,j}\left(  s,t,r\right)  -a_{i,j}\left(  s,t,\rho\right)
\right\vert \leq K\left\vert r-\rho\right\vert
\]
for all $\left(  s,t,r\right)  ,\left(  s,t,\rho\right)  \in\overline{\Omega
}\times\left[  0,T\right]  \times\mathbb{R}$, $1\leq i,j\leq d$.

\item[\textbf{A2}] We assume that the matrix $A\left(  s,t,z\right)
\equiv\left[  a_{i,j}\left(  s,t,z\right)  \right]  _{i,j=1}^{d}$ is
symmetric, positive definite, and has a spectrum that is bounded above and
below by positive constants $\eta_{1}$ and $\eta_{2}$, uniformly so for
$\left(  s,t,z\right)  \in\overline{\Omega}\times\left[  0,T\right]
\times\mathbb{R} $.
\end{enumerate}

\begin{theorem}
\label{DD_Thm}(Douglas and Dupont) Assume the functions $a_{i,j}\left(
s,t,z\right)  $ and $f\left(  s,t,z\right)  $ satisfy the conditions
\textbf{A1}-\textbf{A2}. Let $u$ be the solution of (\ref{e1})-(\ref{e3}) and
assume it is continuously differentiable over $\overline{\Omega}\times\left[
0,T\right]  $. Let $u_{n}$ be the solution of (\ref{e12})-(\ref{e18}). Then
there are positive constants $\gamma$ and $C$ for which%
\begin{equation}%
\begin{array}
[c]{l}%
\Vert u-u_{n}\Vert_{L^{2}\times L^{\infty}}^{2}+\gamma\Vert u-u_{n}%
\Vert_{H_{0}^{1}\times L^{2}}^{2}\smallskip\\
\quad\leq C\,\left\{  \Vert u_{0}-u_{0,n}\Vert_{L^{2}}^{2}+\Vert
u-w\Vert_{L^{2}\times L^{\infty}}^{2}\right. \\
\quad\quad\quad\left.  +\Vert u-w\Vert_{H_{0}^{1}\times L^{2}}^{2}+\Vert
\frac{\partial}{\partial t}\left(  u-w\right)  \Vert_{L^{2}\times L^{2}}%
^{2}\right\}
\end{array}
\label{e24}%
\end{equation}
for any $w$ of the form given on the right side of (\ref{e12}).
\end{theorem}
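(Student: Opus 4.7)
The plan is to follow the classical energy method for Galerkin approximations to nonlinear parabolic problems, with $w$ playing the role of an arbitrary ``intermediate'' function. For each $t\in[0,T]$ I would split the error as
\[
u-u_n=(u-w)+(w-u_n)=:\rho-\theta,\qquad \theta:=u_n-w\in\mathcal{X}_n,
\]
and since the norms of $\rho=u-w$ appear explicitly on the right-hand side of (\ref{e24}), everything reduces to estimating $\theta$ in $L^2\times L^\infty$ and in $H_0^1\times L^2$.

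Next I would write the variational identity (\ref{e10}) for $u$ with test function $v\in\mathcal{X}_n\subseteq H_0^1(\Omega)$, subtract the Galerkin equation (\ref{e14}) for $u_n$ with the same $v$, and rewrite the diffusion term by adding and subtracting $a_{i,j}(\cdot,\cdot,u_n)$. Using $u-u_n=\rho-\theta$ this yields an identity of the form
\[
\Bigl(\tfrac{\partial\theta}{\partial t},v\Bigr)+\sum_{i,j=1}^{d}\int_{\Omega}a_{i,j}(\cdot,\cdot,u_n)\tfrac{\partial\theta}{\partial s_i}\tfrac{\partial v}{\partial s_j}\,ds=R(v),\quad v\in\mathcal{X}_n,
\]
where $R(v)$ collects (i) the elliptic and temporal terms generated by $\rho$ and $\partial_t\rho$, (ii) the Lipschitz-type differences $a_{i,j}(\cdot,\cdot,u)-a_{i,j}(\cdot,\cdot,u_n)$ contracted against $\nabla u$ and $\nabla v$, and (iii) the Lipschitz difference $f(\cdot,\cdot,u)-f(\cdot,\cdot,u_n)$ paired with $v$.

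I would then test with $v=\theta\in\mathcal{X}_n$. The time derivative supplies $\tfrac12\tfrac{d}{dt}\|\theta\|_{L^2}^2$, and the lower spectral bound in \textbf{A2} supplies coercivity $\eta_1\|\nabla\theta\|_{L^2}^2$. To bound $R(\theta)$ I would invoke \textbf{A1} together with the uniform bound on $\nabla u$ granted by $u\in C^1(\overline\Omega\times[0,T])$, and dispatch each term by Cauchy--Schwarz, obtaining a bound of the shape
\[
|R(\theta)|\le C\bigl(\|\rho\|_{L^2}+\|\theta\|_{L^2}\bigr)\|\nabla\theta\|_{L^2}+C\|\nabla\rho\|_{L^2}\|\nabla\theta\|_{L^2}+C\|\partial_t\rho\|_{L^2}\|\theta\|_{L^2}.
\]
Young's inequality absorbs the $\|\nabla\theta\|_{L^2}$ factors into the coercive term, producing the differential inequality
\[
\tfrac{d}{dt}\|\theta\|_{L^2}^2+\gamma\|\nabla\theta\|_{L^2}^2\le C\bigl\{\|\theta\|_{L^2}^2+\|\rho\|_{L^2}^2+\|\nabla\rho\|_{L^2}^2+\|\partial_t\rho\|_{L^2}^2\bigr\}.
\]
Integrating on $[0,t]$, applying Gronwall, taking the supremum in $t$ for the $L^2$ piece, and using $\|u-u_n\|^2\le 2\|\rho\|^2+2\|\theta\|^2$ would then deliver (\ref{e24}); the initial term enters via $\|\theta(\cdot,0)\|_{L^2}^2\le 2\|u_0-u_{0,n}\|_{L^2}^2+2\|\rho(\cdot,0)\|_{L^2}^2$.

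The main obstacle is the nonlinear diffusion term
\[
\sum_{i,j}\int_{\Omega}\bigl[a_{i,j}(\cdot,\cdot,u)-a_{i,j}(\cdot,\cdot,u_n)\bigr]\tfrac{\partial u}{\partial s_i}\tfrac{\partial\theta}{\partial s_j}\,ds,
\]
which is not linear in the error. Handling it requires pulling $\|\nabla u\|_{L^\infty}$ out of the integral (so the $C^1$ hypothesis on $u$ is essential here) and then combining the Lipschitz bound on $a_{i,j}$ from \textbf{A1} with Cauchy--Schwarz to produce $\|u-u_n\|_{L^2}\|\nabla\theta\|_{L^2}$. After the split $u-u_n=\rho-\theta$ and one more use of Young's inequality to hide $\|\nabla\theta\|_{L^2}^2$ on the left, the term is reduced to quantities already controlled by the right-hand side of (\ref{e24}).
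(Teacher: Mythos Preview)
Your argument is correct and is precisely the energy method of Douglas and Dupont. Note, however, that the paper itself does not give a proof of this theorem at all: after stating the result it simply remarks that the assumptions \textbf{A1}--\textbf{A2} imply the hypotheses of \cite[Thm.~7.1]{DD1970} and that the conclusion follows from that reference. In other words, the paper treats Theorem~\ref{DD_Thm} as a black-box citation, whereas you have reconstructed the underlying proof (error splitting $u-u_n=\rho-\theta$, coercivity from \textbf{A2}, Lipschitz control of the nonlinear diffusion term via $\|\nabla u\|_{L^\infty}$, Young's inequality, and Gronwall). Your sketch is faithful to the original Douglas--Dupont argument and would serve as a self-contained proof; the paper's ``proof'' buys only brevity.
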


The norms used in (\ref{e24}) are given by%
\begin{align*}
\Vert v\Vert_{L^{2}\times L^{\infty}}  &  =\sup_{0\leq t\leq T}\Vert v\left(
\cdot,t\right)  \Vert_{L^{2}\left(  \Omega\right)  }\medskip\\
\Vert v\Vert_{L^{2}\times L^{2}}  &  =\Vert v\Vert_{L^{2}\left(  \Omega
\times\left[  0,T\right]  \right)  }\medskip\\
\Vert v\Vert_{H_{0}^{1}\times L^{2}}^{2}  &  =\int_{0}^{T}\Vert v\left(
\cdot,t\right)  \Vert_{H_{0}^{1}\left(  \Omega\right)  }^{2}\,dt
\end{align*}
The assumptions of the theorem imply the assumptions used in \cite[Thm.
7.1]{DD1970}, and the conclusion follows from the cited paper.

To apply this theorem, we need bounds on the norms given in (\ref{e24}) for
$u-w$. To obtain these, we use the following approximation theoretic result
that follows from Ragozin \cite{ragozin}.

\begin{lemma}
\label{ragozin_extension}Assume that $g\left(  x,t\right)  ,\,\partial
g\left(  x,t\right)  /\partial t\ $are $k$ times continously differentiable
with respect to $x\in\overline{\mathbb{B}}_{d}$, \ for some $k\geq0$ and
$0\leq t\leq T$. Further, assume \ that all such $k^{\text{th}}$-order
derivatives satisfy a H\"{o}lder condition with exponent $\gamma\in(0,1]$
\ and with respect to $x\in\overline{\mathbb{B}}_{d}$,%
\[%
\begin{tabular}
[c]{c}%
$\left\vert h\left(  x,t\right)  -h\left(  y,t\right)  \right\vert \leq
c_{k,\gamma}\left(  g\right)  \left\vert x-y\right\vert ^{\gamma},\medskip$\\
$\left\vert \dfrac{\partial h\left(  x,t\right)  }{\partial t}-\dfrac{\partial
h\left(  y,t\right)  }{\partial t}\right\vert \leq c_{k,\gamma}\left(
g\right)  \left\vert x-y\right\vert ^{\gamma},$%
\end{tabular}
\]
uniformly for $x,y\in\overline{\mathbb{B}}_{d}$ and $0\leq t\leq T$, where $h
$ denotes a generic $k^{\text{th}}$-order derivative of $g$ with respect to
$x\in\overline{\mathbb{B}}_{d}$. The quantity $c_{k,\gamma}\left(  g\right)  $
is called the H\"{o}lder constant. \ Let \ $\left\{  \varphi_{1},\dots
,\varphi_{N}\right\}  $ denote a basis of $\Pi_{n}$. Then for each degree
$n\geq1$, there exists
\[
g_{n}\left(  x,t\right)  =\sum_{k=1}^{N_{n}}\beta_{k}\left(  t\right)
\varphi_{k}\left(  x\right)
\]
which satisfies%
\[
\max_{0\leq t\leq T}\max_{x\in\overline{\mathbb{B}}_{d}}\left\vert g\left(
x,t\right)  -g_{n}\left(  x,t\right)  \right\vert \leq\dfrac{b_{k,\gamma}%
}{n^{k+\gamma}}c_{k,\gamma}\left(  g\right)  ,
\]%
\[
\max_{0\leq t\leq T}\max_{x\in\overline{\mathbb{B}}_{d}}\left\vert
\dfrac{\partial g\left(  x,t\right)  }{\partial t}-\dfrac{\partial
g_{n}\left(  x,t\right)  }{\partial t}\right\vert \leq\dfrac{b_{k,\gamma}%
}{n^{k+\gamma}}c_{k,\gamma}\left(  g\right)  ,
\]
for some constant $b_{k,\gamma}>0$ that is independent of $g$.
\end{lemma}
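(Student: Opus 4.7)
The plan is to reduce this to the parameter-free Ragozin theorem by realizing it via a \emph{linear} approximation operator. Ragozin's construction (in the cited paper) is not just an existence statement: the approximant is obtained from $g$ by a bounded linear operator $P_n : C(\overline{\mathbb{B}}_d) \to \Pi_n$ (built from averaged Jackson-type kernels followed by polynomial projection onto $\Pi_n$). This operator satisfies, for every $h\in C(\overline{\mathbb{B}}_d)$ whose $k$-th partial derivatives are $\gamma$-Hölder continuous with constant $c_{k,\gamma}(h)$,
\[
\max_{x\in\overline{\mathbb{B}}_d}\bigl|h(x)-(P_n h)(x)\bigr| \leq \frac{b_{k,\gamma}}{n^{k+\gamma}}\,c_{k,\gamma}(h).
\]
I would take this linear realization as the starting point.

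Given that, I would define the time-parametrized approximant simply by freezing $t$:
\[
g_n(x,t) \;=\; \bigl(P_n\,g(\cdot,t)\bigr)(x).
\]
Because $P_n$ has range $\Pi_n$, one has $g_n(\cdot,t)=\sum_{k=1}^{N_n}\beta_k(t)\varphi_k$, where each coefficient is of the form $\beta_k(t)=L_k[g(\cdot,t)]$ for a bounded linear functional $L_k$ on $C(\overline{\mathbb{B}}_d)$. The first bound in the lemma then follows directly by applying Ragozin's estimate pointwise in $t$, using the hypothesis that the Hölder constant $c_{k,\gamma}(g(\cdot,t))$ is bounded uniformly in $t$ by $c_{k,\gamma}(g)$, and then taking the supremum over $0\le t\le T$.

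For the second bound, the key point is that, by linearity of $P_n$ (equivalently, of each $L_k$), differentiation in $t$ commutes with the approximation:
\[
\frac{\partial g_n}{\partial t}(x,t) \;=\; P_n\!\left[\frac{\partial g(\cdot,t)}{\partial t}\right]\!(x).
\]
This commutation is justified by writing $\beta_k(t)=L_k[g(\cdot,t)]$, forming the difference quotients $(g(\cdot,t+h)-g(\cdot,t))/h$, and passing to the limit inside the bounded linear functional $L_k$ — the hypothesis of joint continuity of $\partial g/\partial t$ on $\overline{\mathbb{B}}_d\times[0,T]$ gives uniform convergence of the difference quotients in $C(\overline{\mathbb{B}}_d)$, which is all that is needed. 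Once the commutation is established, I would apply Ragozin's inequality again, now to the function $\partial g(\cdot,t)/\partial t$, whose $k$-th $x$-derivatives are $\gamma$-Hölder with the same constant $c_{k,\gamma}(g)$ by assumption, and take the supremum over $t$.

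The only real obstacle is the first step: arguing that Ragozin's estimate admits a linear realization. This is not literally stated in the lemma, but it is a standard feature of the Jackson-type constructions used in Ragozin's paper, and without it there is no way to link the approximation of $g$ to that of $\partial g/\partial t$. Once this is granted, the remainder of the proof is routine: pointwise-in-$t$ application of the scalar estimate, uniform bounds on Hölder constants, and a standard differentiation-under-a-bounded-functional argument.
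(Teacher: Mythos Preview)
Your proposal is correct and matches the paper's approach: the paper's proof is just a pointer, saying the result follows from a ``careful examination of the proof of Ragozin'' together with a parametrized analogue already carried out over the sphere, and what that careful examination yields is precisely what you have written out---Ragozin's Jackson-type construction is a bounded linear operator $P_n$, so applying it pointwise in $t$ and commuting with $\partial/\partial t$ gives both estimates. You have simply made explicit the mechanism the paper leaves implicit.
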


\begin{proof}
This result can be obtained by a careful examination of the proof of Ragozin
\cite[Thm. 3.4]{ragozin}. A similar argument for approximation of a
parameterized family $g\left(  x,t\right)  $ over the unit sphere
$\mathbb{S}^{d-1}$ is given in \cite{AtkHan}. The present result over
$\mathbb{B}_{d}$ follows by combining that of \cite[\S 4.2.5]{AtkHan} over
$\mathbb{S}^{d}$ with the argument of Ragozin over $\mathbb{B}_{d}$.$\medskip
$\hfill
\end{proof}

Next, we must look at the approximation of the solution $\widetilde{u}\left(
x,t\right)  $ by means of polynomials of the form given on the right side of
(\ref{e12}). To do this, we use a trick from \cite[(9)-(15)]{ach2008}. Begin
with the result that%
\begin{equation}
\Delta:\widetilde{\mathcal{X}}_{n}%
\underset{onto}{\overset{1-1}{\longrightarrow}}\Pi_{n}\text{.}\label{e28}%
\end{equation}
A short proof is given in \cite[\S 2.2]{ah2005}. For any $t\in\left[
0,T\right]  $, consider a function $\widetilde{u}$ which satisfies
$\widetilde{u}\left(  x,t\right)  =0$ for all $x\in\mathbb{S}^{d-1}%
=\partial\mathbb{B}_{d}$. Define $g=\Delta_{x}\widetilde{u}$. Then%
\[
\widetilde{u}\left(  x,t\right)  =\int_{\mathbb{B}_{d}}G\left(  x,y\right)
g\left(  y,t\right)  \,dy,\quad\quad x\in\overline{\mathbb{B}}_{d},
\]
with $G$ the Green's function for the elliptic boundary value problem%
\begin{align*}
-\Delta v\left(  x\right)   &  =g\left(  x\right)  ,\quad\quad x\in
\mathbb{B}_{d},\\
v\left(  x\right)   &  =0,\quad\quad x\in\mathbb{S}^{d-1}.
\end{align*}
For example, in $\mathbb{R}^{2}$,
\[
G\left(  x,y\right)  =\dfrac{1}{2\pi}\log\dfrac{\left\vert x-y\right\vert
}{\left\vert \mathcal{T}(x)-y\right\vert },\quad\quad x,y\in\mathbb{B}_{2},
\]
with $\mathcal{T}(x)$ the inverse of $x$ with respect to the unit circle
$\mathbb{S}^{1}$. Let $g_{n}\left(  x,t\right)  $ be the polynomial referenced
in the preceding Lemma \ref{ragozin_extension}, and define%
\begin{equation}
\widetilde{w}_{n}\left(  x,t\right)  =\int_{\mathbb{B}_{d}}G\left(
x,y\right)  g_{n}\left(  y,t\right)  \,dy,\quad\quad x\in\overline{\mathbb{B}%
}_{d}.\label{e30}%
\end{equation}
From (\ref{e28}), $\widetilde{w}_{n}\left(  \cdot,t\right)  \in
\widetilde{\mathcal{X}}_{n}\,$, $0\leq t\leq T$; and $\widetilde{w}_{n}$ is an
approximation of the original function $\widetilde{u}$.

\begin{lemma}
\label{approx_lemma}Assume $\widetilde{u}\left(  \cdot,t\right)  \in
C^{k,\gamma}\left(  \overline{\mathbb{B}}_{d}\right)  $ for $0\leq t\leq T$,
with $k\geq2$, $0<\gamma\leq1$. Then for $n\geq1$, the function $\widetilde{w}%
_{n}\left(  x,t\right)  $ of (\ref{e30}) is of the form%
\begin{equation}
\widetilde{w}_{n}\left(  x,t\right)  =\sum_{k=1}^{N_{n}}\alpha_{k}\left(
t\right)  \widetilde{\psi}_{k}\left(  x\right) \label{e31}%
\end{equation}
and it satisfies%
\begin{align}
\left\Vert \widetilde{u}\left(  \cdot,t\right)  -\widetilde{w}_{n}\left(
\cdot,t\right)  \right\Vert _{C\left(  \overline{\mathbb{B}}_{d}\right)  } &
\leq\dfrac{b_{k,\gamma}\alpha_{1}\left(  G\right)  }{n^{k+\gamma-2}%
}c_{k,\gamma}\left(  g\right)  ,\medskip\label{e32}\\
\left\Vert \nabla_{x}\left[  \widetilde{u}\left(  \cdot,t\right)
-\widetilde{w}_{n}\left(  \cdot,t\right)  \right]  \right\Vert _{C\left(
\overline{\mathbb{B}}_{d}\right)  } &  \leq\dfrac{b_{k,\gamma}\alpha
_{2}\left(  G\right)  }{n^{k+\gamma-2}}c_{k,\gamma}\left(  g\right)
,\medskip\label{e33}\\
\left\Vert \frac{\partial}{\partial t}\left[  \widetilde{u}\left(
\cdot,t\right)  -\widetilde{w}_{n}\left(  \cdot,t\right)  \right]  \right\Vert
_{C\left(  \overline{\mathbb{B}}_{d}\right)  } &  \leq\dfrac{b_{k,\gamma
}\alpha_{1}\left(  G\right)  }{n^{k+\gamma-2}}c_{k,\gamma}\left(  g\right)
\label{e34}%
\end{align}
for $0\leq t\leq T$. The constants $\alpha_{1}$ and $\alpha_{2}$ are given by%
\begin{align*}
\alpha_{1}\left(  G\right)   &  =\max_{x\in\overline{\mathbb{B}}_{d}}%
\int_{\mathbb{B}_{d}}\left\vert G\left(  x,y\right)  \right\vert
\,dy,\medskip\\
\alpha_{2}\left(  G\right)   &  =\max_{x\in\overline{\mathbb{B}}_{d}}%
\int_{\mathbb{B}_{d}}\left\vert \nabla_{x}G\left(  x,y\right)  \right\vert
\,dy,
\end{align*}
and these are easily shown to be finite. The remaining constants $b_{k,\gamma
}$ and $c_{k,\gamma}\left(  g\right)  $ are taken from Lemma
\ref{ragozin_extension}.
\end{lemma}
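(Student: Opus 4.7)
The plan is to combine the Green's function representation already set up in (\ref{e30}) with Ragozin's polynomial approximation bound from Lemma \ref{ragozin_extension}, applied to the function $g(y,t) = \Delta_x\widetilde{u}(y,t)$. Since $\widetilde{u}(\cdot,t)\in C^{k,\gamma}(\overline{\mathbb{B}}_d)$ with $k\geq 2$, the function $g(\cdot,t)$ belongs to $C^{k-2,\gamma}(\overline{\mathbb{B}}_d)$, so Lemma \ref{ragozin_extension} applies (with index $k-2$) and produces a polynomial approximant $g_n(\cdot,t)\in\Pi_n$ satisfying simultaneous uniform bounds on $g-g_n$ and on $\partial_t g-\partial_t g_n$ of order $n^{-(k-2+\gamma)}=n^{-(k+\gamma-2)}$.

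First I would verify the structural statement that $\widetilde{w}_n(\cdot,t)\in\widetilde{\mathcal{X}}_n$ so that the expansion (\ref{e31}) exists. By construction, the function defined by (\ref{e30}) lies in $H_0^1(\mathbb{B}_d)$ and satisfies $-\Delta\widetilde{w}_n(\cdot,t)=g_n(\cdot,t)\in\Pi_n$. By the isomorphism (\ref{e28}), the unique preimage of $-g_n(\cdot,t)$ under $\Delta$ inside $H_0^1(\mathbb{B}_d)$ must lie in $\widetilde{\mathcal{X}}_n$, and hence $\widetilde{w}_n(\cdot,t)$ does. Writing the basis of $\widetilde{\mathcal{X}}_n$ as $\{\widetilde{\psi}_k\}_{k=1}^{N_n}$, the coefficients $\alpha_k(t)$ are the coordinates of $\widetilde{w}_n(\cdot,t)$ in this basis and inherit continuity (and differentiability) in $t$ from that of $g_n(\cdot,t)$.

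Next I would derive the three bounds. Since both $\widetilde{u}(\cdot,t)$ and $\widetilde{w}_n(\cdot,t)$ vanish on $\mathbb{S}^{d-1}$ and are solutions of $-\Delta v=g$, $-\Delta v=g_n$ respectively, the Green's function representation gives
\[
\widetilde{u}(x,t)-\widetilde{w}_n(x,t)=\int_{\mathbb{B}_d}G(x,y)\bigl(g(y,t)-g_n(y,t)\bigr)\,dy.
\]
Taking absolute values inside the integral and using the uniform bound on $g-g_n$ from Lemma \ref{ragozin_extension} yields (\ref{e32}) with the factor $\alpha_1(G)$ coming out of the integral. For (\ref{e33}), I would differentiate under the integral in $x$, so that $\nabla_x G(x,y)$ replaces $G(x,y)$ and the same argument produces the factor $\alpha_2(G)$. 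For (\ref{e34}), since $G(x,y)$ does not depend on $t$, dominated convergence justifies interchanging $\partial_t$ with the spatial integral, giving
\[
\partial_t\bigl[\widetilde{u}-\widetilde{w}_n\bigr](x,t)=\int_{\mathbb{B}_d}G(x,y)\bigl(\partial_t g(y,t)-\partial_t g_n(y,t)\bigr)\,dy,
\]
and the second inequality of Lemma \ref{ragozin_extension} applied to $\partial_t g-\partial_t g_n$ combined with $\alpha_1(G)$ completes the estimate.

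The main technical point, rather than a serious obstacle, is that the \emph{same} polynomial $g_n$ must control both $g-g_n$ and $\partial_t g-\partial_t g_n$; this is precisely what Lemma \ref{ragozin_extension} packages together, so no second selection argument is needed. The only auxiliary observation is the finiteness of $\alpha_1(G)$ and $\alpha_2(G)$, which follows from the explicit logarithmic form of $G$ in $d=2$ (and the analogous $|x-y|^{2-d}$ form in $d\geq 3$), combined with the fact that the complementary reflection term is bounded on $\overline{\mathbb{B}}_d\times\overline{\mathbb{B}}_d$; these kernel singularities are integrable in $y$ uniformly in $x\in\overline{\mathbb{B}}_d$ for both $G$ and $\nabla_x G$.
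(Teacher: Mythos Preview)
Your proposal is correct and follows essentially the same route as the paper's proof: write the error $\widetilde{u}-\widetilde{w}_n$ (and its $x$-gradient and $t$-derivative) as an integral against $G$ or $\nabla_x G$ applied to $g-g_n$ or $\partial_t(g-g_n)$, then pull out the kernel norm $\alpha_1(G)$ or $\alpha_2(G)$ and invoke Lemma~\ref{ragozin_extension}. You have in fact supplied more detail than the paper does---the explicit observation that $g\in C^{k-2,\gamma}$ so that Lemma~\ref{ragozin_extension} is applied with index $k-2$, the justification via (\ref{e28}) that $\widetilde{w}_n(\cdot,t)\in\widetilde{\mathcal{X}}_n$, and the remark on the integrability of the Green's kernel---whereas the paper simply records the three integral identities and bounds them in one line.
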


\begin{proof}
For the error in approximating $\widetilde{u}$, we have%
\begin{align*}
\widetilde{u}\left(  x,t\right)  -\widetilde{w}_{n}\left(  x,t\right)   &
=\int_{\mathbb{B}_{d}}G\left(  x,y\right)  \left[  g\left(  y,t\right)
-g_{n}\left(  y,t\right)  \right]  \,dy,\medskip\\
\nabla_{x}\left[  \widetilde{u}\left(  \cdot,t\right)  -\widetilde{w}%
_{n}\left(  \cdot,t\right)  \right]   &  =\int_{\mathbb{B}_{d}}\nabla
_{x}G\left(  x,y\right)  \left[  g\left(  y,t\right)  -g_{n}\left(
y,t\right)  \right]  \,dy,\medskip\\
\frac{\partial}{\partial t}\left[  \widetilde{u}\left(  \cdot,t\right)
-\widetilde{w}_{n}\left(  \cdot,t\right)  \right]   &  =\int_{\mathbb{B}_{d}%
}G\left(  x,y\right)  \frac{\partial}{\partial t}\left[  g\left(  y,t\right)
-g_{n}\left(  y,t\right)  \right]  \,dy
\end{align*}
Thus%
\[
\left\Vert \widetilde{u}\left(  \cdot,t\right)  -\widetilde{w}_{n}\left(
\cdot,t\right)  \right\Vert _{C\left(  \overline{\mathbb{B}}_{d}\right)  }%
\leq\alpha_{1}\left(  G\right)  \left\Vert g\left(  \cdot,t\right)
-g_{n}\left(  \cdot,t\right)  \right\Vert _{C\left(  \overline{\mathbb{B}}%
_{d}\right)  }%
\]
showing (\ref{e32}); and (\ref{e33}) and (\ref{e34}) follow similarly.$\left.
{}\right.  $\hfill\medskip
\end{proof}

These results can be extended to the approximation of $u\left(  \cdot
,t\right)  $ over $\Omega$, by the subspace $\mathcal{X}_{n}$.

\begin{lemma}
Assume $u\left(  \cdot,t\right)  \in C^{k,\gamma}\left(  \overline{\Omega
}\right)  $ for $0\leq t\leq T$, with $k\geq2$, $0<\gamma\leq1$; and assume
$\Phi\in C^{m}\left(  \overline{\mathbb{B}}_{d}\right)  $ with $m\geq k+3$.
Then for $n\geq1$ there exists
\begin{equation}
w_{n}\left(  s,t\right)  =\sum_{k=1}^{N_{n}}\alpha_{k}\left(  t\right)
\psi_{k}\left(  s\right)  ,\quad\quad s\in\overline{\Omega},\quad0\leq t\leq
T,\label{e40}%
\end{equation}
for which
\begin{align}
\left\Vert u\left(  \cdot,t\right)  -w_{n}\left(  \cdot,t\right)  \right\Vert
_{C\left(  \overline{\Omega}\right)  } &  \leq\dfrac{\omega_{1}\left(
k,\gamma,u\right)  }{n^{k+\gamma-2}},\medskip\label{e41}\\
\left\Vert \nabla_{x}\left[  u\left(  \cdot,t\right)  -w_{n}\left(
\cdot,t\right)  \right]  \right\Vert _{C\left(  \overline{\Omega}\right)  } &
\leq\dfrac{\omega_{2}\left(  k,\gamma,u\right)  }{n^{k+\gamma-2}}%
,\medskip\label{e42}\\
\left\Vert \frac{\partial}{\partial t}\left[  u\left(  \cdot,t\right)
-w_{n}\left(  \cdot,t\right)  \right]  \right\Vert _{C\left(  \overline
{\Omega}\right)  } &  \leq\dfrac{\omega_{3}\left(  k,\gamma,u\right)
}{n^{k+\gamma-2}}\label{e43}%
\end{align}
for $0\leq t\leq T$.
\end{lemma}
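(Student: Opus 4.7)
The plan is to transfer Lemma \ref{approx_lemma}, which lives on the ball $\mathbb{B}_d$, back to $\Omega$ via the change of variables $\Phi$, and then read off the three estimates using the chain rule and compactness.

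First I would set $\widetilde{u}(x,t)=u(\Phi(x),t)$. The assumption $\Phi\in C^{m}(\overline{\mathbb{B}}_{d})$ with $m\geq k+3$, combined with $u(\cdot,t)\in C^{k,\gamma}(\overline{\Omega})$, gives (by a H\"older-exponent version of Lemma \ref{transform_lemma}, which is a routine chain-rule calculation) that $\widetilde{u}(\cdot,t)\in C^{k,\gamma}(\overline{\mathbb{B}}_{d})$ uniformly in $t$, and likewise for $\partial\widetilde{u}/\partial t$. Moreover, since $\Phi$ maps $\mathbb{S}^{d-1}$ onto $\partial\Omega$ and $u$ vanishes on $\partial\Omega$, we have $\widetilde{u}(\cdot,t)=0$ on $\mathbb{S}^{d-1}$, so $\widetilde{u}$ satisfies the hypothesis of Lemma \ref{approx_lemma}. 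Applying that lemma produces $\widetilde{w}_{n}(\cdot,t)\in\widetilde{\mathcal{X}}_{n}$ of the form (\ref{e31}) satisfying bounds (\ref{e32})--(\ref{e34}), each of order $n^{-(k+\gamma-2)}$, uniformly in $t\in[0,T]$.

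Next I would define
\[
w_{n}(s,t)=\widetilde{w}_{n}(\Psi(s),t),\qquad s\in\overline{\Omega}.
\]
Writing $\widetilde{w}_{n}(x,t)=\sum_{k=1}^{N_{n}}\alpha_{k}(t)\widetilde{\psi}_{k}(x)$ with $\widetilde{\psi}_{k}\in\widetilde{\mathcal{X}}_{n}$, the defining relation (\ref{e9b}) gives $\psi_{k}(s):=\widetilde{\psi}_{k}(\Psi(s))\in\mathcal{X}_{n}$, so $w_{n}$ has precisely the form (\ref{e40}). Now I transfer the three norms. For the sup-norm,
\[
\|u(\cdot,t)-w_{n}(\cdot,t)\|_{C(\overline{\Omega})}
=\|\widetilde{u}(\cdot,t)-\widetilde{w}_{n}(\cdot,t)\|_{C(\overline{\mathbb{B}}_{d})},
\]
since $\Psi:\overline{\Omega}\to\overline{\mathbb{B}}_{d}$ is a bijection; this yields (\ref{e41}). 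For the time derivative, $\partial_{t}$ commutes with the spatial change of variables, so the identical identity gives (\ref{e43}). For the gradient, the chain rule yields
\[
\nabla_{s}\bigl[u(s,t)-w_{n}(s,t)\bigr]
=K(s)^{\mathrm{T}}\,\nabla_{x}\bigl[\widetilde{u}(\cdot,t)-\widetilde{w}_{n}(\cdot,t)\bigr]\bigl(\Psi(s)\bigr),
\]
and since $K(s)=J(\Psi(s))^{-1}$ is continuous on the compact set $\overline{\Omega}$ (using $\Phi\in C^{m}$ and (\ref{e8})), the matrix norm of $K$ is bounded by some finite constant $M_{K}$; multiplying bound (\ref{e33}) by $M_{K}$ gives (\ref{e42}). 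The constants $\omega_{i}(k,\gamma,u)$ absorb $b_{k,\gamma}$, $\alpha_{1}(G)$, $\alpha_{2}(G)$, $M_{K}$, and the H\"older constant $c_{k,\gamma}(g)$ of $g=\Delta_{x}\widetilde{u}$.

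The main obstacle is the first step: verifying that $g=\Delta_{x}\widetilde{u}$ lies in $C^{k-2,\gamma}(\overline{\mathbb{B}}_{d})$ with H\"older constant controlled in terms of $\|u\|_{C^{k,\gamma}(\overline{\Omega})}$ and the $C^{m}$-norms of $\Phi$ and $\Psi$, so that Ragozin's estimate applied to $g$ produces the rate $n^{-((k-2)+\gamma)}=n^{-(k+\gamma-2)}$ claimed in Lemma \ref{approx_lemma}. Writing $\Delta_{x}\widetilde{u}$ via the chain rule introduces products of first and second derivatives of $\Phi$ with first and second derivatives of $u$; the hypothesis $m\geq k+3$ provides the safety margin needed so that every coefficient appearing in this expression is at least $C^{k-2,\gamma}$ and so that bounding the H\"older seminorm of the product is routine. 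Everything afterward is a bookkeeping exercise tracking constants through the two changes of variables.
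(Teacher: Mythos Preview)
Your proposal is correct and follows essentially the same route as the paper: transport $u$ to $\widetilde{u}$ on $\overline{\mathbb{B}}_d$ via $\Phi$, invoke Lemma~\ref{transform_lemma} to secure the regularity needed for Lemma~\ref{approx_lemma}, apply that lemma, and pull the resulting $\widetilde{w}_n$ back to $\Omega$ through $\Psi$. The paper's proof is a two-line sketch of exactly this; you have simply unpacked the chain-rule bookkeeping (especially the role of $m\geq k+3$ in controlling the H\"older regularity of $g=\Delta_x\widetilde{u}$) that the paper leaves implicit.
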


\begin{proof}
Use the transformation $s=\Phi\left(  x\right)  $ to move between functions
over $\Omega$ and functions over $\mathbb{B}_{d}$. By means Lemma
\ref{transform_lemma} for the transformation $\Phi$, these results follow
immediately from Lemma \ref{approx_lemma}.$\left.  {}\right.  $\hfill\medskip
\end{proof}

Combining these results with the Douglas and Dupont Theorem \ref{DD_Thm} leads
to the following convergence result for the Galerkin method (\ref{e14}%
)-(\ref{e18}).

\begin{theorem}
Assume that the solution $u$ of the parabolic problem (\ref{e1})-(\ref{e3})
satisfies $u\left(  \cdot,t\right)  \in C^{k,\gamma}\left(  \overline{\Omega
}\right)  $ for $0\leq t\leq T$, with $k\geq2$, $0<\gamma\leq1$. Moreover,
assume the transformation $\Phi\in C^{m}\left(  \overline{\mathbb{B}}%
_{d}\right)  $ with $m\geq k+3$. Then for $n\geq1$, the solution $u_{n}$ of
(\ref{e14})-(\ref{e18}) satisfies%
\[
\Vert u-u_{n}\Vert_{L^{2}\times L^{\infty}}^{2},\ \Vert u-u_{n}\Vert
_{H_{0}^{1}\times L^{2}}^{2}=\mathcal{O}\left(  n^{-\left(  k+\gamma-2\right)
}\right)  .
\]

\end{theorem}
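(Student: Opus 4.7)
The approach is to combine the Douglas--Dupont bound (\ref{e24}) with the approximation lemma (\ref{e40})--(\ref{e43}). First I would verify the hypotheses of Theorem~\ref{DD_Thm}: \textbf{A1}--\textbf{A2} are in force by assumption, and the regularity $u(\cdot,t)\in C^{k,\gamma}(\overline\Omega)$ with $k\geq 2$, together with the parabolic equation (\ref{e1}), yields the continuous differentiability of $u$ on $\overline\Omega\times[0,T]$ that the theorem requires.

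Next I would take as the test approximant $w$ on the right side of (\ref{e24}) the function $w_n$ supplied by the preceding lemma; by (\ref{e40}) it lies in $\mathcal{X}_n$ pointwise in $t$, so it is admissible. I would also choose as initial data for the Galerkin scheme $u_{0,n}(s):=w_n(s,0)$ (equivalently, $\alpha_k^{(0)}=\alpha_k(0)$), so that the term $\|u_0-u_{0,n}\|_{L^2}^2$ is controlled by (\ref{e41}) at $t=0$. Each of the four terms on the right side of (\ref{e24}) is then dominated by a $C(\overline\Omega)$-norm of $u-w_n$ (or of its gradient or time derivative) through the trivial embedding $\|\cdot\|_{L^2(\Omega)}\leq|\Omega|^{1/2}\|\cdot\|_{C(\overline\Omega)}$ and a time integration over $[0,T]$ of length at most $T$. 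Concretely, estimates (\ref{e41})--(\ref{e43}) give
\begin{align*}
\|u_0-u_{0,n}\|_{L^{2}}^{2} &\leq |\Omega|\,\omega_1(k,\gamma,u)^{2}\, n^{-2(k+\gamma-2)},\\
\|u-w_n\|_{L^{2}\times L^{\infty}}^{2} &\leq |\Omega|\,\omega_1(k,\gamma,u)^{2}\, n^{-2(k+\gamma-2)},\\
\|u-w_n\|_{H_{0}^{1}\times L^{2}}^{2} &\leq T\,|\Omega|\,[\omega_1(k,\gamma,u)^{2}+\omega_2(k,\gamma,u)^{2}]\, n^{-2(k+\gamma-2)},\\
\|\partial_t(u-w_n)\|_{L^{2}\times L^{2}}^{2} &\leq T\,|\Omega|\,\omega_3(k,\gamma,u)^{2}\, n^{-2(k+\gamma-2)}.
\end{align*}
Summing these four bounds and substituting into (\ref{e24}) produces the stated rate (in fact with exponent $2(k+\gamma-2)$, which is at least the claimed $k+\gamma-2$ whenever $k\geq 2$).

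There is no real obstacle once the approximation lemma is available; the work is assembly. The delicate analytic points---constructing an element of $\widetilde{\mathcal{X}}_n$ whose value, gradient, and time derivative are simultaneously well-controlled, and deriving the resulting rate from Ragozin's theorem via the Green's function of $-\Delta$ on $\mathbb{B}_d$---are already packaged into the preceding lemma. A small item worth noting is the role of the hypothesis $\Phi\in C^m(\overline{\mathbb{B}}_d)$ with $m\geq k+3$: by Lemma~\ref{transform_lemma} it ensures that the pullback $\widetilde u$ inherits the full $C^{k,\gamma}$ regularity of $u$ (with two derivatives to spare, enough to pass through the Laplacian inversion in the construction of $w_n$), so the change of variables $s=\Phi(x)$ does not degrade the exponent $k+\gamma$ appearing in the final rate.
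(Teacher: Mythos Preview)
Your proposal is correct and follows exactly the route the paper takes: the paper itself offers no detailed proof, merely stating that ``combining these results with the Douglas and Dupont Theorem~\ref{DD_Thm} leads to the following convergence result,'' and you have carried out precisely that combination by inserting the approximant $w_n$ from (\ref{e40})--(\ref{e43}) into the right side of (\ref{e24}). Your observation that the argument actually delivers the sharper exponent $2(k+\gamma-2)$ on the squared norms is valid and worth retaining.
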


\section{Implementation issues\label{sec_implement}}

Recall the method (\ref{e12})-(\ref{e18}) and the notation used there. For
notation, let%
\[
\mathsf{a}_{N}\left(  t\right)  =\left[  \alpha_{1}\left(  t\right)
,\dots,\alpha_{N}\left(  t\right)  \right]  ^{\text{T}}.
\]
The system (\ref{e14}) can be written symbolically as%
\begin{equation}
G_{n}\mathsf{a}_{N}^{^{\prime}}\left(  t\right)  =B_{n}\left(  t,u_{n}\right)
\mathsf{a}_{N}\left(  t\right)  +\boldsymbol{f}_{N}\left(  t,u_{n}\right)
,\label{e50}%
\end{equation}%
\begin{equation}
G_{n}=\left[  \left(  \psi_{k},\psi_{\ell}\right)  \right]  _{k,\ell=1}%
^{N},\label{e52}%
\end{equation}%
\begin{equation}
\left(  B_{n}\left(  t,u_{n}\right)  \right)  _{k,\ell}=-\sum_{i,j=1}^{d}%
\int_{\Omega}a_{i,j}\left(  s,t,u_{n}\left(  s,t\right)  \right)
\dfrac{\partial\psi_{k}\left(  s,t\right)  }{\partial s_{i}}\dfrac
{\partial\psi_{\ell}\left(  s,t\right)  }{\partial s_{j}}\,ds,\label{e54}%
\end{equation}%
\begin{equation}
\boldsymbol{f}_{N}\left(  t,u_{n}\right)  _{\ell}=\left(  f\left(
\cdot,t,u_{n}\left(  \cdot,t\right)  \right)  ,\psi_{\ell}\right)  ,\quad
\quad\ell=1,\dots,N.\label{e56}%
\end{equation}

For the implementation, we discuss separately the cases of $\Omega
\subseteq\mathbb{R}^{2}$ and $\Omega\subseteq\mathbb{R}^{3}$. In both cases we
must address the following issues

\begin{enumerate}
\item[\textbf{A1}.] Select a basis $\left\{  \psi_{1},\dots,\psi_{N}\right\}
$ for $\mathcal{X}_{n}$.

\item[\textbf{A2}.] Discuss the numerical integration of the integrals in
(\ref{e52})-(\ref{e56}). \ 

\item[\textbf{A3}.] Approximate the initial value $u_{0}$ by some $u_{0,n}%
\in\mathcal{X}_{n}$, as suggested in (\ref{e16}).

\item[\textbf{A4}.] Discuss the solution of the nonlinear system of
differential equations (\ref{e50}).

\item[\textbf{A5}.] Evaluate the solution $u_{n}$ at points of $\Omega$ for
each given $t$.
\end{enumerate}

Several of these issues were addressed in the previous papers \cite{ach2008},
\cite{ah2010}, \cite{ahc2009}, and we refer to the discussion in those papers
for more complete discussions.
%Implementation 2D

\subsection{Two dimensions\label{sec_2D}}

Let $\Pi_{n}\left(  \mathbb{B}_{2}\right)  $ denote the restriction to
$\mathbb{B}_{2}$ of the polynomials over $\mathbb{R}^{2}$. To construct a
basis for the approximation space $\mathcal{X}_{n}$ of (\ref{e9b}), begin by
choosing an orthonormal basis $\left\{  \varphi_{1},\dots,\varphi_{N}\right\}
$ for $\Pi_{n}\left(  \mathbb{B}_{2}\right)  $, using the standard inner
product for $L^{2}\left(  \mathbb{B}_{2}\right)  $. \ The dimension of
$\Pi_{n}\left(  \mathbb{B}_{2}\right)  $ is
\[
N\equiv N_{n}=\frac{1}{2}\left(  n+1\right)  \left(  n+2\right)
\]
There are many possible choices of an orthonormal basis, a number of which are
enumerated in \cite[\S 2.3.2]{DX} and \cite[\S 1.2]{xu2004}. We have chosen
one that is particularly convenient for our computations. These are the `ridge
polynomials' introduced by Logan and\ Shepp \cite{Loga} for solving an image
reconstruction problem. We summarize here the results needed for our work.

Let
\[
\mathcal{V}_{n}=\left\{  P\in\Pi_{n}\left(  \mathbb{B}_{2}\right)  :\left(
P,Q\right)  =0\quad\forall Q\in\Pi_{n-1}\right\}
\]
the polynomials of degree $n$ that are orthogonal to all elements of
$\Pi_{n-1}\left(  \mathbb{B}_{2}\right)  $. Then the dimension of
$\mathcal{V}_{n}$ is $n+1$; moreover,%
\begin{equation}
\Pi_{n}\left(  \mathbb{B}_{2}\right)  =\mathcal{V}_{0}\oplus\mathcal{V}%
_{1}\oplus\cdots\oplus\mathcal{V}_{n}\label{e100}%
\end{equation}
It is standard to construct orthonormal bases of each $\mathcal{V}_{n}$ and to
then combine them to form an orthonormal basis of $\Pi_{n}\left(
\mathbb{B}_{2}\right)  $ using the latter decomposition. \ As an orthonormal
basis of $\mathcal{V}_{n}$ we use%
\begin{equation}
\widetilde{\varphi}_{n,k}(x)=\frac{1}{\sqrt{\pi}}U_{n}\left(  x_{1}\cos\left(
kh\right)  +x_{2}\sin\left(  kh\right)  \right)  ,\quad x\in D,\quad
h=\frac{\pi}{n+1}\label{e101}%
\end{equation}
for $k=0,1,\dots,n$. The function $U_{n}$ is the Chebyshev polynomial of the
second kind of degree $n$:%
\[
U_{n}(t)=\frac{\sin\left(  n+1\right)  \theta}{\sin\theta},\quad\quad
t=\cos\theta,\quad-1\leq t\leq1,\quad n=0,1,\dots
\]
The family $\left\{  \widetilde{\varphi}_{n,k}\right\}  _{k=0}^{n}$ is an
orthonormal basis of $\mathcal{V}_{n}$. As a basis of $\Pi_{n}$, we order
$\left\{  \widetilde{\varphi}_{n,k}\right\}  $ lexicographically based on the
ordering in (\ref{e101}) and (\ref{e100}):%
\[
\left\{  \widetilde{\varphi}_{\ell}\right\}  _{\ell=1}^{N}\equiv\left\{
\widetilde{\varphi}_{0,0},\,\widetilde{\varphi}_{1,0},\,\widetilde{\varphi
}_{1,1},\,\widetilde{\varphi}_{2,0},\,\dots,\,\widetilde{\varphi}%
_{n,0},\,\dots,\widetilde{\varphi}_{n,n}\right\}
\]

Returning to (\ref{e9b}), we define%
\[
\widetilde{\psi}_{n,k}(x)=\left(  1-\left\vert x\right\vert ^{2}\right)
\widetilde{\varphi}_{n,k}(x)
\]
and the basis $\left\{  \psi_{m,k}:0\leq k\leq m,\ 0\leq m\leq n\right\}  $
for $\mathcal{X}_{n}$ is defined using (\ref{e9b}),
\[
\psi_{m,k}\left(  s\right)  =\widetilde{\psi}_{n,k}(x),\quad\quad
s=\Phi\left(  x\right)  .
\]
We will also refer to this basis as $\left\{  \psi_{1},\dots,\psi_{N}\right\}
$. In general, this is not an orthonormal basis; but the hope is that
$\left\{  \widetilde{\varphi}_{\ell}\right\}  _{\ell=1}^{N}$ being orthonormal
will result in a reasonably well-conditioned matrix for the linear systems
associated with the solution of (\ref{e14}). Examples of this for elliptic
problems are given in \cite{ach2008}, \cite{ah2010}, \cite{ahc2009}.

To calculate the first order partial derivatives of $\widetilde{\psi}%
_{n,k}(x)$, we need $U_{n}^{\prime}(t)$. The values of $U_{n}(t)$ and
$U_{n}^{^{\prime}}(t)$ are evaluated using the standard triple recursion
relations%
\begin{align*}
U_{n+1}(t)  &  =2tU_{n}(t)-U_{n-1}(t)\\
U_{n+1}^{^{\prime}}(t)  &  =2U_{n}(t)+2tU_{n}^{^{\prime}}(t)-U_{n-1}%
^{^{\prime}}(t)
\end{align*}
Second derivatives, if needed, can be evaluated similarly.

For the integrals in (\ref{e14}), for any dimension $d\geq2$, we first
transform them to integrals over $\mathbb{B}_{d}$. For an arbitrary function
$g$ defined on $\Omega$, use the transformation $s=\Phi\left(  x\right)  $ to
write%
\[
\int_{\Omega}g\left(  s\right)  \,ds=\int_{\mathbb{B}_{d}}g\left(  \Phi\left(
x\right)  \right)  \,\det J\left(  x\right)  \,dx
\]
with $J\left(  x\right)  $ the Jacobian matrix (\ref{e7b}) for $\Phi\left(
x\right)  $. Applying this to the integrals in (\ref{e14}),%
\begin{equation}
\left(  \psi_{k},\psi_{\ell}\right)  =\int_{\Omega}\psi_{k}\left(  s\right)
\psi_{\ell}\left(  s\right)  \,ds=\int_{\mathbb{B}_{d}}\widetilde{\psi}%
_{k}\left(  x\right)  \widetilde{\psi}_{\ell}\left(  x\right)  \,\det J\left(
x\right)  \,dx\label{e110}%
\end{equation}%
\begin{equation}%
\begin{array}
[c]{l}%
\left(  f\left(  \cdot,t,%
%TCIMACRO{\dsum \limits_{k=1}^{N_{n}}}%
%BeginExpansion
{\displaystyle\sum\limits_{k=1}^{N_{n}}}
%EndExpansion
\alpha_{k}\left(  t\right)  \psi_{k}\right)  ,\psi_{\ell}\right)  \medskip\\
\quad\quad=%
%TCIMACRO{\dint _{\mathbb{B}_{d}}}%
%BeginExpansion
{\displaystyle\int_{\mathbb{B}_{d}}}
%EndExpansion
f\left(  \Phi\left(  x\right)  ,t,%
%TCIMACRO{\dsum \limits_{k=1}^{N_{n}}}%
%BeginExpansion
{\displaystyle\sum\limits_{k=1}^{N_{n}}}
%EndExpansion
\alpha_{k}\left(  t\right)  \widetilde{\psi}_{k}\left(  x\right)  \right)
\widetilde{\psi}_{k}\left(  x\right)  \,\det J\left(  x\right)  \,dx
\end{array}
\label{e112}%
\end{equation}%
\begin{equation}%
\begin{array}
[c]{l}%
%TCIMACRO{\dsum \limits_{i,j=1}^{d}}%
%BeginExpansion
{\displaystyle\sum\limits_{i,j=1}^{d}}
%EndExpansion%
%TCIMACRO{\dint _{\Omega}}%
%BeginExpansion
{\displaystyle\int_{\Omega}}
%EndExpansion
a_{i,j}\left(  s,t,u_{n}\left(  s,t\right)  \right)  \dfrac{\partial\psi
_{k}\left(  s\right)  }{\partial s_{i}}\dfrac{\partial\psi_{\ell}\left(
s\right)  }{\partial s_{j}}\,ds\medskip\\
\quad\quad=%
%TCIMACRO{\dint _{\Omega}}%
%BeginExpansion
{\displaystyle\int_{\Omega}}
%EndExpansion
\left\{  \nabla\psi_{k}\left(  s\right)  \right\}  ^{\text{T}}A\left(
s,u_{n}\left(  s,t\right)  \right)  \left\{  \nabla\psi_{\ell}\left(
s\right)  \right\}  \,ds\medskip\\
\quad\quad=%
%TCIMACRO{\dint _{\mathbb{B}_{d}}}%
%BeginExpansion
{\displaystyle\int_{\mathbb{B}_{d}}}
%EndExpansion
\left\{  \nabla\widetilde{\psi}_{k}\left(  x\right)  \right\}  ^{\text{T}%
}\widetilde{A}\left(  x,t,%
%TCIMACRO{\dsum \limits_{k=1}^{N_{n}}}%
%BeginExpansion
{\displaystyle\sum\limits_{k=1}^{N_{n}}}
%EndExpansion
\alpha_{k}\left(  t\right)  \widetilde{\psi}_{k}\left(  x\right)  \right)
\left\{  \nabla\widetilde{\psi}_{\ell}\left(  x\right)  \right\}  \det
J\left(  x\right)  \,dx
\end{array}
\label{e114}%
\end{equation}
with%
\begin{equation}
\widetilde{A}\left(  x,t,z\right)  =J\left(  x\right)  ^{-1}A\left(
\Phi\left(  x\right)  ,t,z\right)  J\left(  x\right)  ^{-\text{T}%
}.\label{e116}%
\end{equation}

For the numerical approximation of the integrals in (\ref{e110})-(\ref{e114})
with $d=2$, the integrals being evaluated over the unit disk $\mathbb{B}_{2}$,
write a general function $g$ as%
\[
g\left(  x\right)  =\widehat{g}\left(  r,\theta\right)  \equiv g\left(
r\cos\theta,r\sin\theta\right)  .
\]
Then use the formula%
\begin{equation}
\int_{\mathbb{B}_{2}}g(x)\,dx\approx\sum_{l=0}^{q}\sum_{m=0}^{2q}%
\widehat{g}\left(  r_{l},\frac{2\pi\,m}{2q+1}\right)  \omega_{l}\frac{2\pi
}{2q+1}r_{l}\label{e118}%
\end{equation}
with $q\geq1$ an integer. Here the numbers $\omega_{l}$ are the weights of the
$\left(  q+1\right)  $-point Gauss-Legendre quadrature formula on $[0,1]$. The
formula (\ref{e118}) uses the trapezoidal rule with $2q+1$ subdivisions for
the integration over $\mathbb{B}_{2}$ in the azimuthal variable. This
quadrature (\ref{e118}) is exact for all polynomials $g\in\Pi_{2q}\left(
\mathbb{B}_{2}\right)  $.

To approximate the initial condition $u_{0}$, as in (\ref{e16}), \ we
approximate $u_{0}\left(  \Phi\left(  x\right)  \right)  $ by its orthogonal
projection onto $\widetilde{\mathcal{X}}_{n}$,%
\[
\mathcal{P}_{n}\left(  u_{0}\circ\Phi\right)  =\sum_{j=1}^{N_{n}}\beta
_{j}\widetilde{\psi}_{j}%
\]
The coefficients $\left\{  \beta_{j}\right\}  $ are obtained by solving the
linear system%
\begin{align}
\sum_{j=1}^{N_{n}}\beta_{j}\left(  \widetilde{\psi}_{j},\widetilde{\psi}%
_{i}\right)   &  =  \left(  u_{0}\circ\Phi,\widetilde{\psi}_{i}\right)
,\quad\quad i=1,\dots,N_{n}.\label{e1005}%
\end{align}
We approximate further by applying the numerical integration (\ref{e118}) to
each of the inner products in this system. With $q\geq n+2$, the matrix
coefficients for the left side of this linear system will be evaluated
exactly. The result of solving this system with the associated numerical
integration yields an approximation to $u_{0}\left(  \Phi\left(  x\right)
\right)  $; and using $s=\Phi\left(  x\right)  $, we have an initial estimate
of the form given in (\ref{e16}).

To solve the system of ordinary differential equations (\ref{e14}), we have
used the \textsc{Matlab} program \texttt{ode15s}, which is based on the
multistep BDF methods of orders 1 through 5; see \cite[\S 8.2]{AHS}, \cite[p.
60]{SGT}. In general, there is often stiffness when solving differential
equations that arise from using a method of lines approximation for parabolic
problems, and that is our reasoning for using the stiff ode code
\texttt{ode15s} rather than an ordinary Runge-Kutta or multistep code. \ No
difficulty arose in solving any of our examples when using this code, although
further work is needed to know whether or not a stiff ode code is indeed
needed. In our numerical examples, we will give some data on condition numbers
that arise in our method.

\subsection{Three dimensions\label{sec_3D}}

Here we denote by $\Pi_{n}(\mathbb{B}_{3})$ the restriction to $\mathbb{B}%
_{3}$ of polynomials over $\mathbb{R}^{3}$ of degree $n$ or less. The first
difference to the two dimensional case is that the dimension of $\Pi
_{n}(\mathbb{B}_{3})$ is given by
\[
N\equiv N_{n}=\frac{1}{6}(n+1)(n+2)(n+3).
\]
But as with the two dimensional case, there is a wide range of orthonormal
basis functions; see \cite{DX}. We choose the following orthormal basis for
$\Pi_{n}(\mathbb{B}_{3})$
\begin{align}
\widetilde{\varphi}_{m,j,k}(x)  & =c_{m,j}\,p_{j}^{(0,m-2j+\frac{1}{2}%
)}(2|x|^{2}-1)S_{\beta,m-2j}(x)\nonumber\\
& =c_{m,j}\,|x|^{m-2j}\;p_{j}^{(0,m-2j+\frac{1}{2})}(2|x|^{2}-1)S_{\beta
,m-2j}\left(  \frac{x}{|x|}\right) \label{e1003}\\
& j=0,\ldots,\lfloor m/2\rfloor,\;\beta=0,1,\ldots,2(m-2j),\;m=0,\ldots
,n\nonumber
\end{align}
The constants $c_{m,j}=2^{\frac{5}{4}+\frac{m}{2}-j}$ normalize the functions
to length one. The functions $p^{(0,m-2j+\frac{1}{2})}$ are the normalized
Jacobi polynomials on the interval $[-1,1]$ with respect to the inner product
\[
(v,w)=\int_{-1}^{1}(1+t)^{m-2j+\frac{1}{2}}\;v(t)w(t)\;dt
\]
Finally the functions $S_{\beta,m-2j}$ are spherical harmonic functions given
by
\[
S_{\beta,k}(\phi,\theta)=\widetilde{c}_{\beta,k}\left\{
\begin{array}
[c]{cl}%
\cos\left(  \frac{\beta}{2}\phi\right)  T_{k}^{\frac{\beta}{2}}(\cos\theta), &
\beta\text{ even},\medskip\\
\sin\left(  \frac{\beta+1}{2}\phi\right)  T_{k}^{\frac{\beta+1}{2}}(\cos
\theta), & \beta\text{ odd}%
\end{array}
\right.
\]
Here the constant $\widetilde{c}_{\beta,k}$ is chosen in such a way that the
functions are orthonormal on the unit sphere $S^{2}$ in $\mathbb{R}^{3}$,
\[
\int_{S^{2}}S_{\beta,k}(x)S_{\widetilde{\beta},\widetilde{k}}(x)\,dx=\delta
_{\beta,\widetilde{\beta}}\,\delta_{k,\widetilde{k}}.
\]
The functions $T_{k}^{l}$ are the associated Legendre polynomials; see
\cite{hobson1965}. In \cite{gautschi2004}, \cite{zhang1996}, one can also find
recurrence formulas for the numerical evaluation of Jacobi and Legendre
polynomials and their derivatives.

The bases for the spaces $\widetilde{\mathcal{X}}_{n}$ and $\mathcal{X}_{n}$
defined in (\ref{e9a}) and (\ref{e9b}) are again, see (\ref{e9a}) and
(\ref{e9b}), defined by
\begin{align}
\widetilde{\psi}_{m,j,k}(x)  & =\left(  1-|x|^{2}\right)  \widetilde{\varphi
}_{m,j,k}(x)\label{eq1003}\\
\psi_{m,j,k}(s)  & =\widetilde{\psi}_{m,j,k}(x),\quad\quad s=\Phi
(x)\label{e1004}%
\end{align}
For the numerical implementation we can also order the bases in
lexicographical order (still using the notation $\widetilde{\psi}$ and $\psi
$), so in the following we can assume that we have bases $\{\widetilde{\psi
}_{l}\mid l=1,\ldots,N_{n}\}$ and $\{\psi_{l}\mid l=1,\ldots,N_{n}\}$ of
$\widetilde{\mathcal{X}}_{n}$ and $\mathcal{X}_{n}$. All integrals which arise
in the formulas (\ref{e50})--(\ref{e56}) for the approximate solution of
(\ref{e14}) are transformed to $\mathbb{B}_{3}$ as has been done in
(\ref{e110})--(\ref{e114}). To evaluate the resulting integrals over the unit
ball in $\mathbb{R}^{3}$ we use spherical coordinates, and a quadrature
formula $Q_{q}$
\begin{align*}
\int_{\mathbb{B}_{3}}g(x)\,dx  & =\int_{0}^{1}\int_{0}^{2\pi}\int_{0}^{\pi
}\widetilde{g}(r,\theta,\phi)r^{2}\sin(\phi)d\phi\,d\theta\,dr\\
& \approx Q_{q}[\widetilde{g}],\quad\text{where}\\
Q_{q}[\widetilde{g}]  & \equiv\sum_{i=1}^{2q}\sum_{j=1}^{q}\sum_{k=1}^{q}%
\frac{\pi}{q}\omega_{j}\nu_{k}\widetilde{g}\left(  \frac{\zeta_{k}+1}{2}%
,\frac{\pi}{2q}i,\arccos(\xi_{j})\right)
\end{align*}
Here $\widetilde{g}$ is the representation of $g$ in spherical coordinates.
The quadrature formula $Q_{q}$ uses a trapezoidal rule in the $\theta$
direction and weighted Gauss--Legendre quadrature formulas in the $\phi$
(weights $\omega_{j}$ and nodes $\arccos(\xi_{j})$) and $r$ direction (weights
$\nu_{k}$ and nodes $(\xi_{k}+1)/2$), as described in \cite{ah2010}. With the
help of this quadrature formula we can also define the numerical approximation
of $u_{0}$, see (\ref{e16}) and (\ref{e18}), by formula (\ref{e1005}).

\section{Numerical examples}%

%TCIMACRO{\FRAME{ftbFU}{3.9998in}{3in}{0pt}{\Qcb{The region $\Omega$ associated
%with (\ref{e204}) and the mapping $\Phi$}}{\Qlb{limacon_region}}%
%{}{\special{ language "Scientific Word";  type "GRAPHIC";
%maintain-aspect-ratio TRUE;  display "USEDEF";  valid_file "F";
%width 3.9998in;  height 3in;  depth 0pt;  original-width 5.8219in;
%original-height 4.3708in;  cropleft "0";  croptop "1";  cropright "1";
%cropbottom "0";  filename 'limacon_region.eps';file-properties "XNPEU";}}}%
%BeginExpansion
\begin{figure}[tb]%
\centering
\includegraphics[
height=3in,
width=3.9998in
]%
{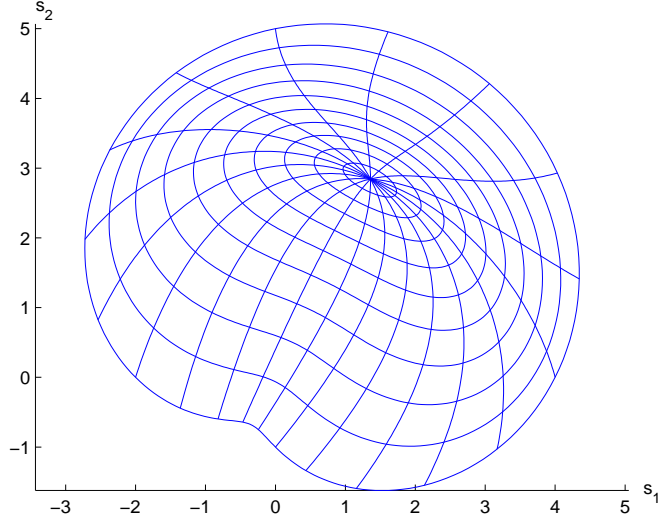}%
\caption{The region $\Omega$ associated with (\ref{e204}) and the mapping
$\Phi$}%
\label{limacon_region}%
\end{figure}
%EndExpansion
We begin with planar examples, followed by some problems on regions $\Omega$
in $\mathbb{R}^{3}$. The examples will all be for the equation%
\begin{equation}
\frac{\partial u\left(  s,t\right)  }{\partial t}=\Delta u\left(  s,t\right)
+f\left(  s,t,u\left(  s,t\right)  \right)  ,\quad\quad s\in\Omega,\quad
t\geq0.\label{e200}%
\end{equation}
To help in constructing our examples, we use%
\begin{align}
f\left(  s,t,z\right)   &  =  f_{1}\left(  s,t,z\right)  +f_{2}\left(
s,t\right)  .\label{e1006}%
\end{align}
We choose various $f_{1}$ to explore the effects of changes in the type of
nonlinearity; and $f_{2}$ is then defined to make the equation (\ref{e200})
valid for any given $u$,%
\begin{align}
f_{2}\left(  s,t\right)   &  =  \frac{\partial u\left(  s,t\right)  }{\partial
t}-\left\{  \Delta u\left(  s,t\right)  +f_{1}\left(  s,t,u\left(  s,t\right)
\right)  \right\}  ,\: s\in\Omega,\quad t\geq0.\label{e1007}%
\end{align}
In the reformulation (\ref{e114}), $A=I$ and thus%
\begin{align}
\widetilde{A}\left(  x,t,z\right)  =J\left(  x\right)  ^{-1}J\left(  x\right)
^{-\text{T}}.\label{e202}%
\end{align}

\subsection{Planar examples}

Begin with the region $\Omega$ whose boundary is a limacon. In particular,
consider the boundary%
\begin{equation}%
\begin{array}
[c]{l}%
\varphi\left(  \theta\right)  =\rho\left(  \theta\right)  \left(  \cos
\theta,\sin\theta\right)  ,\\
\rho\left(  \theta\right)  =3+\cos\theta+2\sin\theta,\quad\quad0\leq\theta
\leq2\pi.
\end{array}
\label{e204}%
\end{equation}
Using the methods of \cite{ah2011}, we obtain a mapping $\Phi:\mathbb{B}%
_{2}\rightarrow\Omega$. Each component of $\Phi$ is a polynomial of degree 3.
To illustrate the mapping we show the images in $\Omega$ of uniformly spaced
circles and radial lines in $\mathbb{B}_{2}$; see Figure \ref{limacon_region}
and note that $\Omega$ is almost convex.

As a particular example for solving (\ref{e200}), let%
\begin{align}
f_{1}\left(  s,t,z\right)   &  =e^{-z}\cos\left(  \pi t\right)  ,\medskip
\label{e206}\\
u\left(  s,t\right)   &  =\left(  1-x_{1}^{2}-x_{2}^{2}\right)  \cos\left(
t+0.05\pi s_{1}s_{2}\right) \label{e208}%
\end{align}
with $s=\Phi\left(  x\right)  $. \ For the numerical integration in
(\ref{e118}), $q=2n$ was chosen, where $n+2$ is the degree of the
approximation $\widetilde{u}_{n}$. This choice of $q$ has always been more
than adequate, and a smaller choice would often have sufficed.

To have a time interval of reasonable length, the problem was solved over
$0\leq t\leq20$, although something longer could have been chosen as well. The
error was checked at 801 points of $\Omega$, chosen as the images under $\Phi$
of 801 points distributed over $\mathbb{B}_{2}$. The graph of $u_{12}\left(
\cdot,20\right)  $ is given in Figure \ref{limacon_soln}, and the associated
error is given in Figure \ref{limacon_error}; in addition, $\left\Vert
u\left(  \cdot,20\right)  -u_{12}\left(  \cdot,20\right)  \right\Vert
_{\infty}\doteq1.94E-4$. Figure \ref{limacon_error_over_time} shows the error
norm $\left\Vert u\left(  \cdot,t\right)  -u_{12}\left(  \cdot,t\right)
\right\Vert _{\infty}$ for 200 evenly spaced values of $t$ in $\left[
0,20\right]  $. There is an oscillatory behaviour which is in keeping with
that of the solution $u$. To illustrate the spectral rate of convergence of
the method, Figure \ref{limacon_error_over_deg} gives the error as the degree
$n$ varies from $6$ to $20$. The linear behaviour of this semi-log graph
implies an exponential rate of convergence of $u_{n}$ to $u$ as a function of
$n$.
%TCIMACRO{\FRAME{ftbFU}{3.9998in}{3in}{0pt}{\Qcb{The approximating solution
%$u_{12}\left(  s,20\right)  $ for the true solution $u\left(  s,20\right)  $
%of (\ref{e208}) over $\Omega$}}{\Qlb{limacon_soln}}{}%
%{\special{ language "Scientific Word";  type "GRAPHIC";
%maintain-aspect-ratio TRUE;  display "USEDEF";  valid_file "F";
%width 3.9998in;  height 3in;  depth 0pt;  original-width 5.834in;
%original-height 4.3863in;  cropleft "0";  croptop "1";  cropright "1";
%cropbottom "0";  filename 'limacon_soln.eps';file-properties "XNPEU";}}}%
%BeginExpansion
\begin{figure}[tb]%
\centering
\includegraphics[
height=3in,
width=3.9998in
]%
{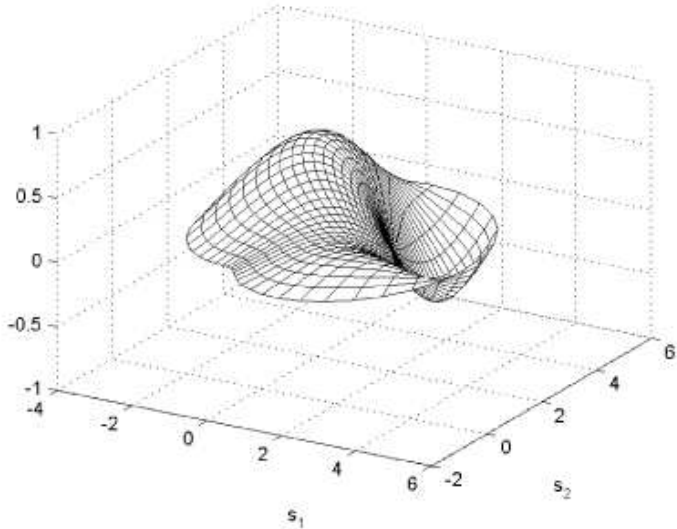}%
\caption{The approximating solution $u_{12}\left(  s,20\right)  $ for the true
solution $u\left(  s,20\right)  $ of (\ref{e208}) over $\Omega$}%
\label{limacon_soln}%
\end{figure}
%EndExpansion
%TCIMACRO{\FRAME{ftbFU}{3.9998in}{3in}{0pt}{\Qcb{The error in the approximating
%solution $u_{12}\left(  s,20\right)  $ for the true solution $u\left(
%s,20\right)  $ of (\ref{e208}) over $\Omega$}}{\Qlb{limacon_error}%
%}{}{\special{ language "Scientific Word";  type "GRAPHIC";
%maintain-aspect-ratio TRUE;  display "USEDEF";  valid_file "F";
%width 3.9998in;  height 3in;  depth 0pt;  original-width 5.834in;
%original-height 4.3863in;  cropleft "0";  croptop "1";  cropright "1";
%cropbottom "0";  filename 'limacon_error.eps';file-properties "XNPEU";}}}%
%BeginExpansion
\begin{figure}[tb]%
\centering
\includegraphics[
height=3in,
width=3.9998in
]%
{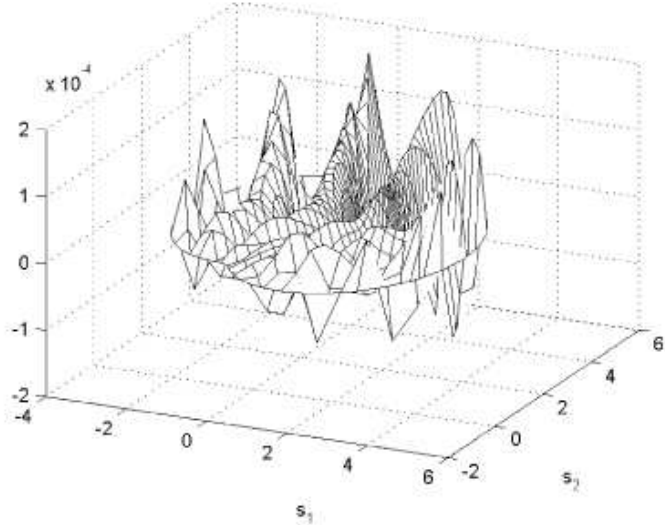}%
\caption{The error in the approximating solution $u_{12}\left(  s,20\right)  $
for the true solution $u\left(  s,20\right)  $ of (\ref{e208}) over $\Omega$}%
\label{limacon_error}%
\end{figure}
%EndExpansion
%TCIMACRO{\FRAME{ftbFU}{3.9998in}{3in}{0pt}{\Qcb{The error $\left\Vert u\left(
%\cdot,t\right)  -u_{12}\left(  \cdot,t\right)  \right\Vert _{\infty}$ for the
%true solution $u\left(  s,t\right)  $ of (\ref{e208})}}%
%{\Qlb{limacon_error_over_time}}{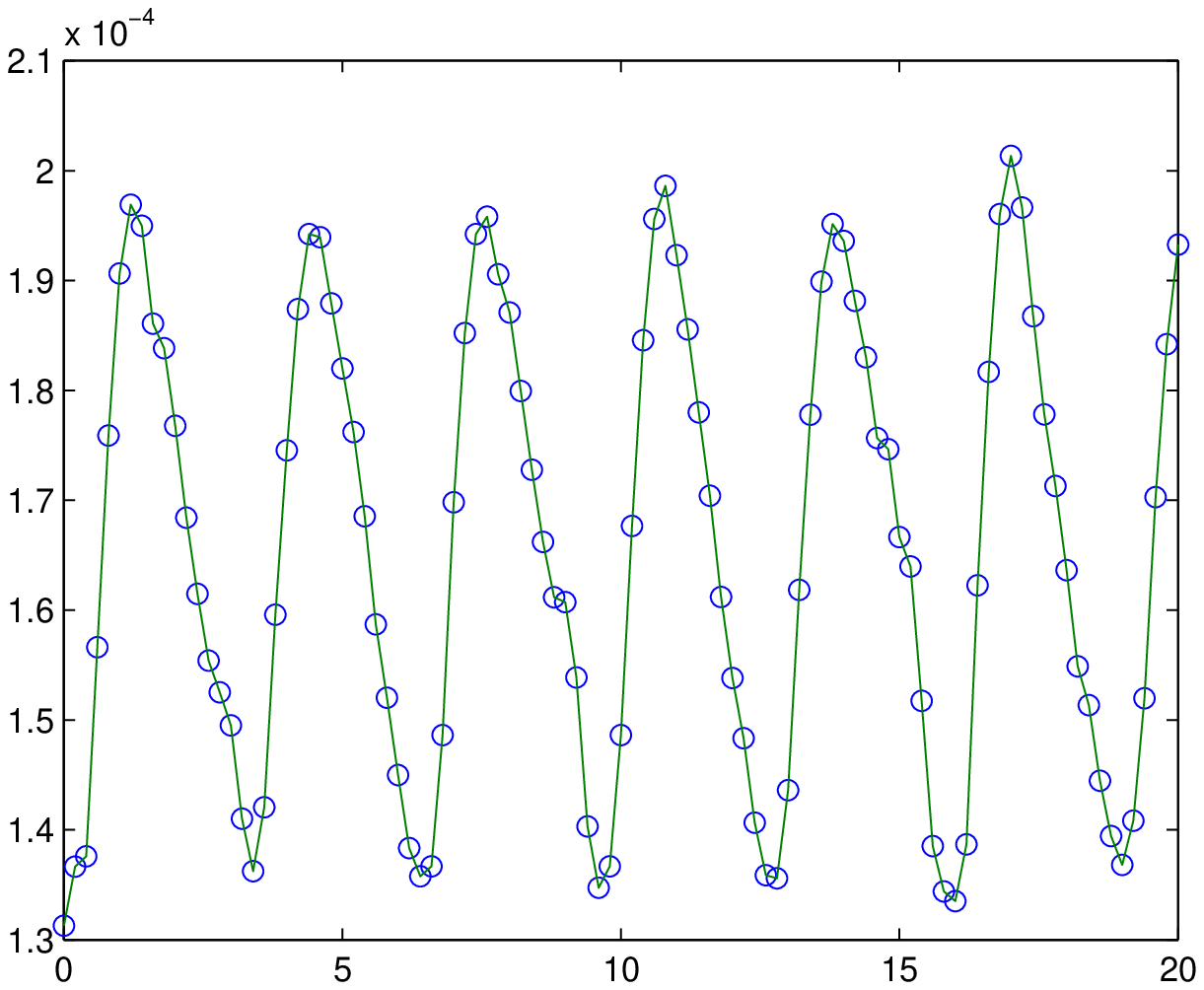}%
%{\special{ language "Scientific Word";  type "GRAPHIC";
%maintain-aspect-ratio TRUE;  display "USEDEF";  valid_file "F";
%width 3.9998in;  height 3in;  depth 0pt;  original-width 5.8219in;
%original-height 4.3708in;  cropleft "0";  croptop "1";  cropright "1";
%cropbottom "0";
%filename 'limacon_error_over_time.eps';file-properties "XNPEU";}}}%
%BeginExpansion
\begin{figure}[tb]%
\centering
\includegraphics[
height=3in,
width=3.9998in
]%
{limacon_error_over_time.eps}%
\caption{The error $\left\Vert u\left(  \cdot,t\right)  -u_{12}\left(
\cdot,t\right)  \right\Vert _{\infty}$ for the true solution $u\left(
s,t\right)  $ of (\ref{e208})}%
\label{limacon_error_over_time}%
\end{figure}
%EndExpansion
%TCIMACRO{\FRAME{ftbFU}{3.9998in}{3in}{0pt}{\Qcb{$n$ vs. $\max\limits_{0\leq
%t\leq20}\left\Vert u\left(  \cdot,t\right)  -u_{n}\left(  \cdot,t\right)
%\right\Vert _{\infty}$}}{\Qlb{limacon_error_over_deg}}%
%{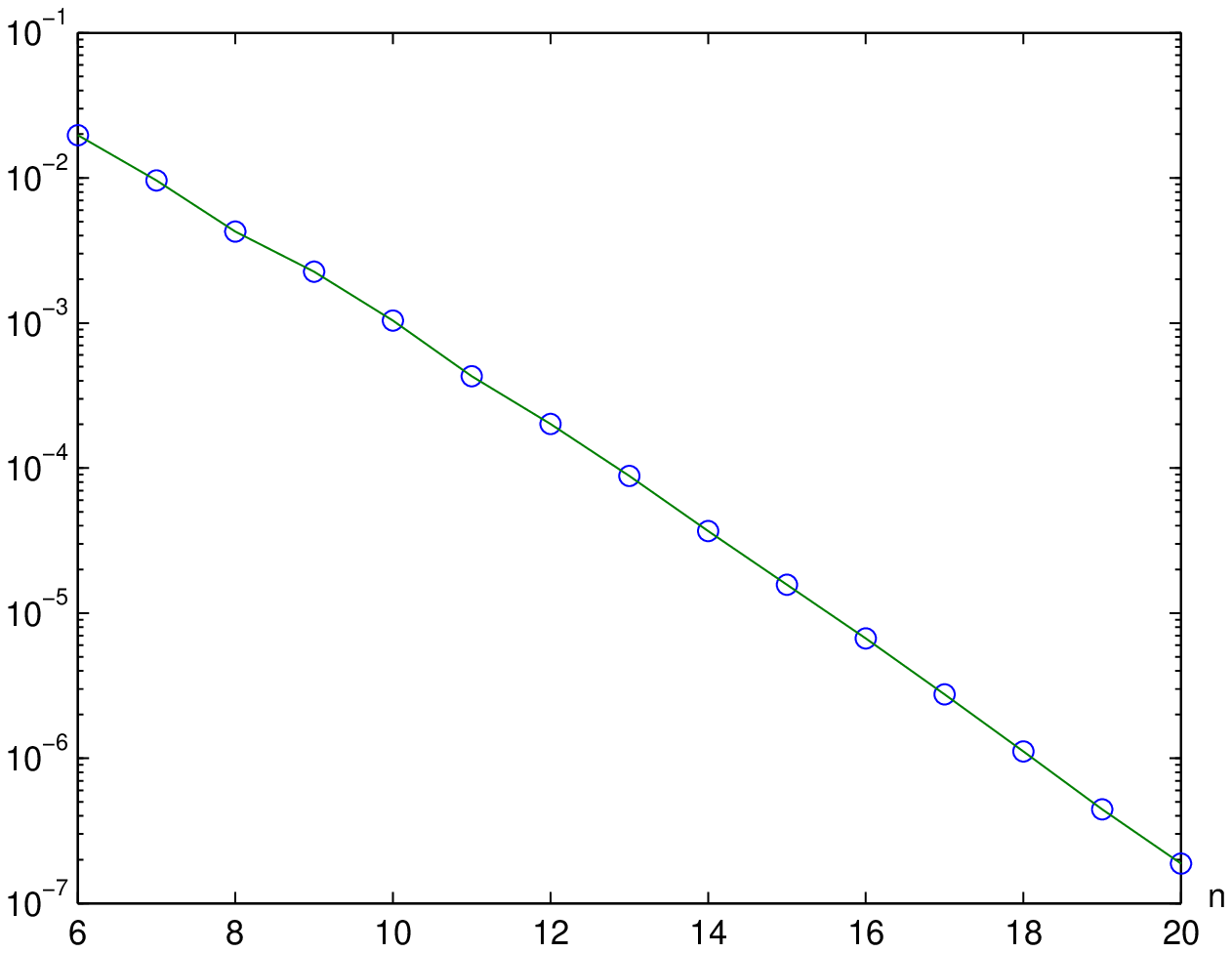}{\special{ language "Scientific Word";
%type "GRAPHIC";  maintain-aspect-ratio TRUE;  display "USEDEF";
%valid_file "F";  width 3.9998in;  height 3in;  depth 0pt;
%original-width 5.8219in;  original-height 4.3708in;  cropleft "0";
%croptop "1";  cropright "1";  cropbottom "0";
%filename 'limacon_error_over_deg.eps';file-properties "XNPEU";}}}%
%BeginExpansion
\begin{figure}[tb]%
\centering
\includegraphics[
height=3in,
width=3.9998in
]%
{limacon_error_over_deg.eps}%
\caption{$n$ vs. $\max\limits_{0\leq t\leq20}\left\Vert u\left(
\cdot,t\right)  -u_{n}\left(  \cdot,t\right)  \right\Vert _{\infty}$}%
\label{limacon_error_over_deg}%
\end{figure}
%EndExpansion

An important aspect on which we have not yet commented is the conditioning of
the matrices in the system (\ref{e50}). In our use of the \textsc{Matlab}
program \texttt{ode15s}, we have written (\ref{e50}) in the form%
\begin{equation}
\mathsf{a}_{N}^{^{\prime}}\left(  t\right)  =G_{n}^{-1}B_{n}\left(
t,u_{n}\right)  \mathsf{a}_{N}\left(  t\right)  +G_{n}^{-1}\boldsymbol{f}%
_{N}\left(  t,u_{n}\right)  ,\label{e210}%
\end{equation}
The matrix $G_{n}^{-1}B_{n}\left(  t,u_{n}\right)  $ is the Jacobian matrix
for this system. \ Investigating experimentally,
\begin{equation}
\operatorname*{cond}\left(  G_{n}^{-1}B_{n}\right)  =\mathcal{O}\left(
N_{n}^{2}\right) \label{e212}%
\end{equation}
where $N_{n}$ is the number of equations in (\ref{e210}). As support for this
assertion, Figure \ref{cond_Loper} shows the graph of $\log\left(  N_{n}%
^{2}\right)  $ vs. $\log\left(  \operatorname*{cond}\left(  G_{n}^{-1}%
B_{n}\right)  \right)  $. There is a clear linear behaviour and the slope is
approximately 1, thus supporting (\ref{e212}). When $\Omega$ is the unit disk,
and $\Phi=I$, the result (\ref{e212}) is still valid experimentally.
%TCIMACRO{\FRAME{ftbFU}{3.9998in}{3in}{0pt}{\Qcb{$\log\left(  N_{n}^{2}\right)
%$ vs. $\log\left(  \operatorname*{cond}\left(  G_{n}^{-1}B_{n}\right)
%\right)  $ for limacon region}}{\Qlb{cond_Loper}}{cond_loper.eps}%
%{\special{ language "Scientific Word";  type "GRAPHIC";
%maintain-aspect-ratio TRUE;  display "USEDEF";  valid_file "F";
%width 3.9998in;  height 3in;  depth 0pt;  original-width 5.8219in;
%original-height 4.3708in;  cropleft "0";  croptop "1";  cropright "1";
%cropbottom "0";  filename '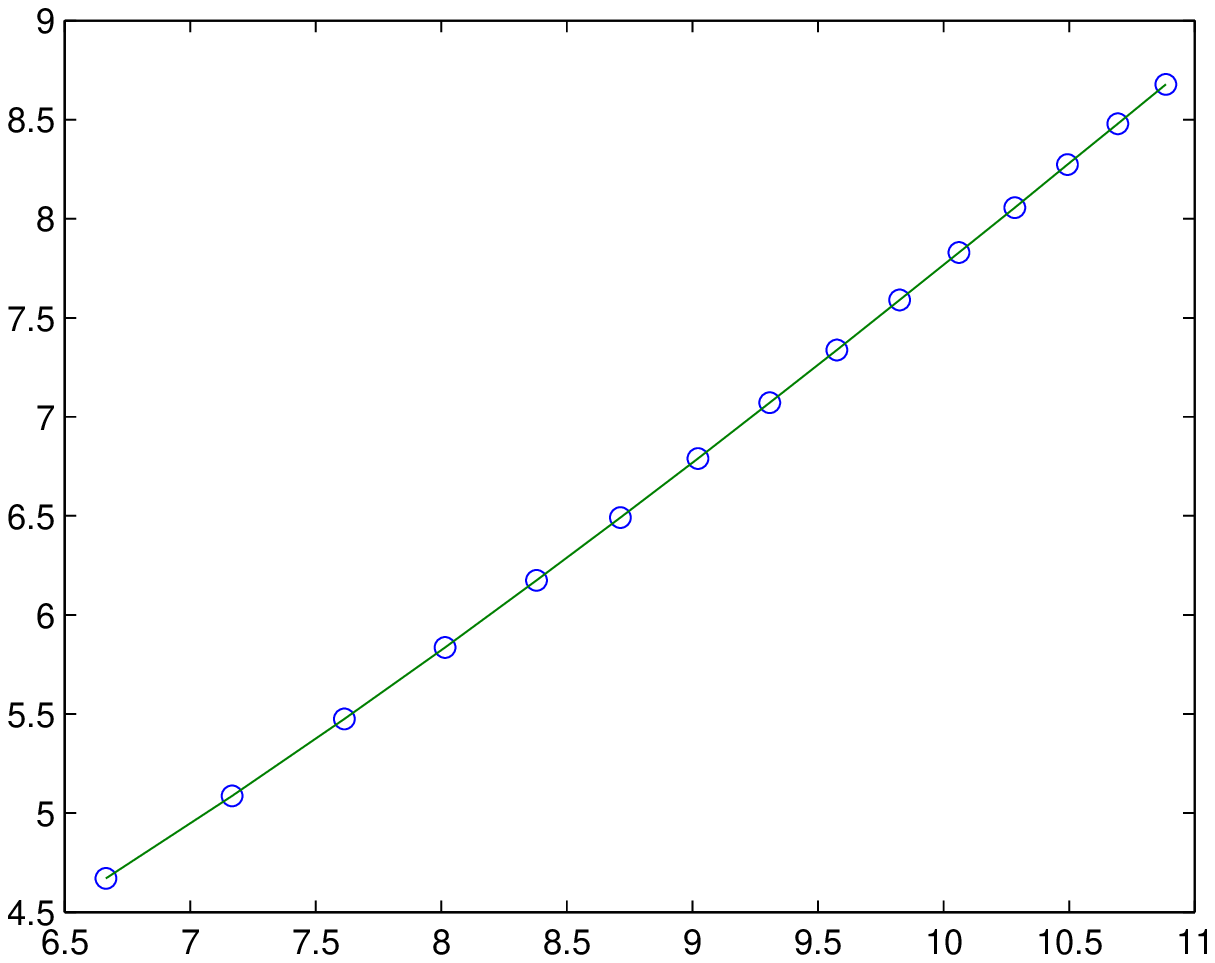';file-properties "XNPEU";}}}%
%BeginExpansion
\begin{figure}[tb]%
\centering
\includegraphics[
height=3in,
width=3.9998in
]%
{cond_Loper.eps}%
\caption{$\log\left(  N_{n}^{2}\right)  $ vs. $\log\left(
\operatorname*{cond}\left(  G_{n}^{-1}B_{n}\right)  \right)  $ for limacon
region}%
\label{cond_Loper}%
\end{figure}
%EndExpansion

As a second example, one for which $\Omega$ is much more nonconvex (although
still star-like), consider the region $\Omega$ with the given boundary
function%
\begin{equation}%
\begin{array}
[c]{l}%
\varphi\left(  \theta\right)  =\rho\left(  \theta\right)  \left(  \cos
\theta,\sin\theta\right)  ,\\
\rho\left(  \theta\right)  =5+\sin\theta+\sin3\theta-\cos5\theta,\quad
\quad0\leq\theta\leq2\pi.
\end{array}
\label{e214}%
\end{equation}
As before, an extension $\Phi$ to $\mathbb{B}_{2}$ is constructed using the
methods of \cite{ah2011}. The mapping $\Phi$ is a polynomial of degree 7 in
each component; and the images in $\Omega$ of uniformly spaced circles and
radial lines in $\mathbb{B}_{2}\,\ $are shown in Figure \ref{amoeba_region}.

Again, use the function $f_{1}$ of (\ref{e206}) and the solution $u$ of
(\ref{e208}). The solution $u_{20}\left(  \cdot,20\right)  $ is shown in
Figure \ref{amoeba_soln} over this new region, and $\left\Vert u\left(
\cdot,20\right)  -u_{20}\left(  \cdot,20\right)  \right\Vert _{\infty}$
$\doteq0.00136$. Figure \ref{amoeba_error_over_time} shows the error in
$u_{20}\left(  \cdot,t\right)  $ over time, and Figure
\ref{amoeba_error_over_deg} shows how the error in $u_{n}$ varies with the
degree $n$. The latter again indicates a spectral order of convergence,
although slower than that shown in Figure \ref{limacon_error_over_deg}. The
condition numbers still satisfy the empirical estimate of (\ref{e212}). \
%TCIMACRO{\FRAME{ftbFU}{3.9998in}{3in}{0pt}{\Qcb{The region $\Omega$ associated
%with (\ref{e214}) and the mapping $\Phi$}}{\Qlb{amoeba_region}}%
%{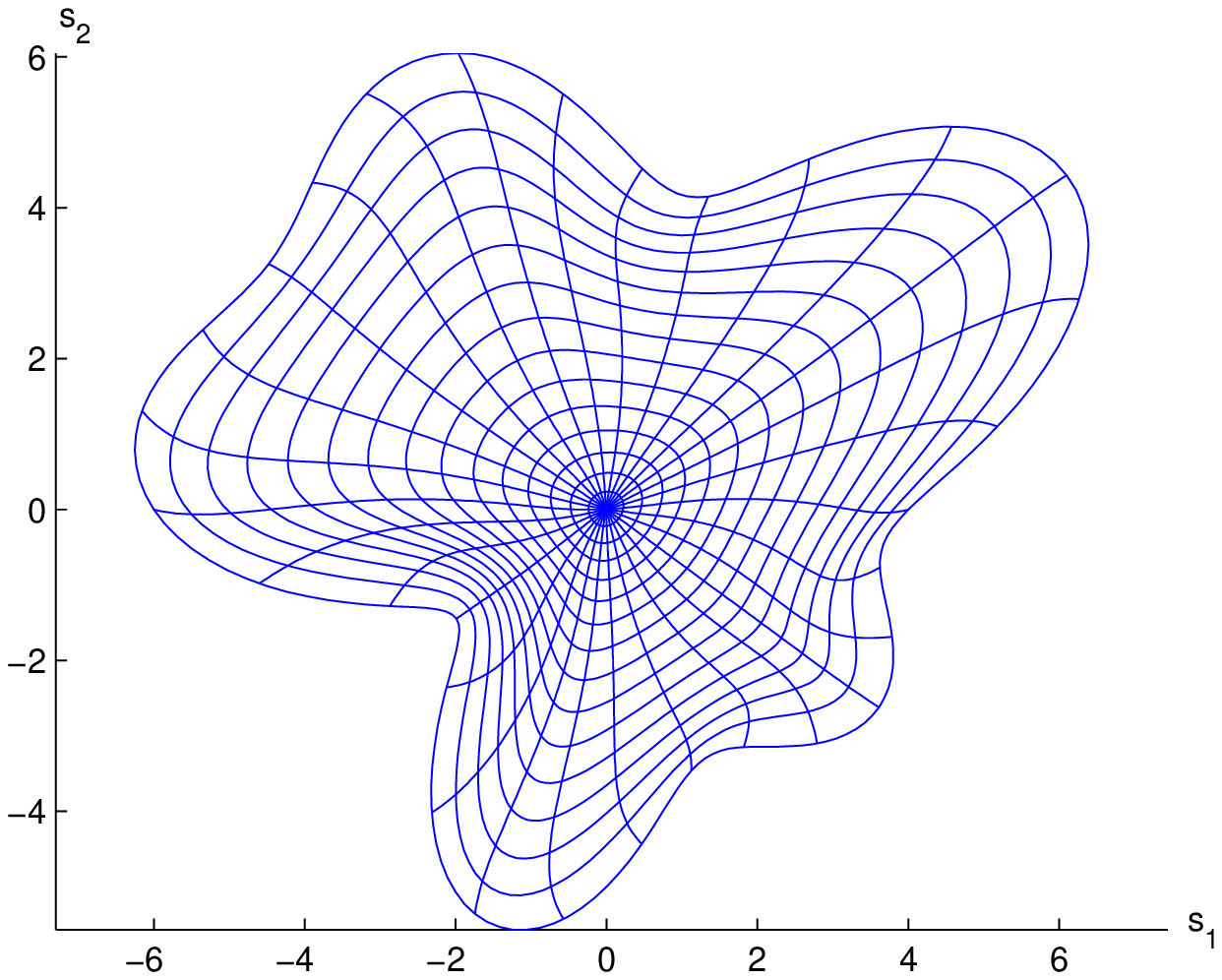}{\special{ language "Scientific Word";  type "GRAPHIC";
%maintain-aspect-ratio TRUE;  display "USEDEF";  valid_file "F";
%width 3.9998in;  height 3in;  depth 0pt;  original-width 5.8219in;
%original-height 4.3708in;  cropleft "0";  croptop "1";  cropright "1";
%cropbottom "0";  filename 'amoeba_region.eps';file-properties "XNPEU";}}}%
%BeginExpansion
\begin{figure}[tb]%
\centering
\includegraphics[
height=3in,
width=3.9998in
]%
{amoeba_region.eps}%
\caption{The region $\Omega$ associated with (\ref{e214}) and the mapping
$\Phi$}%
\label{amoeba_region}%
\end{figure}
%EndExpansion
%TCIMACRO{\FRAME{ftbFU}{3.9998in}{3in}{0pt}{\Qcb{The approximating solution
%$u_{20}\left(  s,20\right)  $ for the true solution $u\left(  s,20\right)  $
%of (\ref{e208}) over the $\Omega$ of Figure \ref{amoeba_region}}%
%}{\Qlb{amoeba_soln}}{}{\special{ language "Scientific Word";
%type "GRAPHIC";  maintain-aspect-ratio TRUE;  display "USEDEF";
%valid_file "F";  width 3.9998in;  height 3in;  depth 0pt;
%original-width 5.834in;  original-height 4.3863in;  cropleft "0";
%croptop "1";  cropright "1";  cropbottom "0";
%filename 'amoeba_soln.eps';file-properties "XNPEU";}}}%
%BeginExpansion
\begin{figure}[tb]%
\centering
\includegraphics[
height=3in,
width=3.9998in
]%
{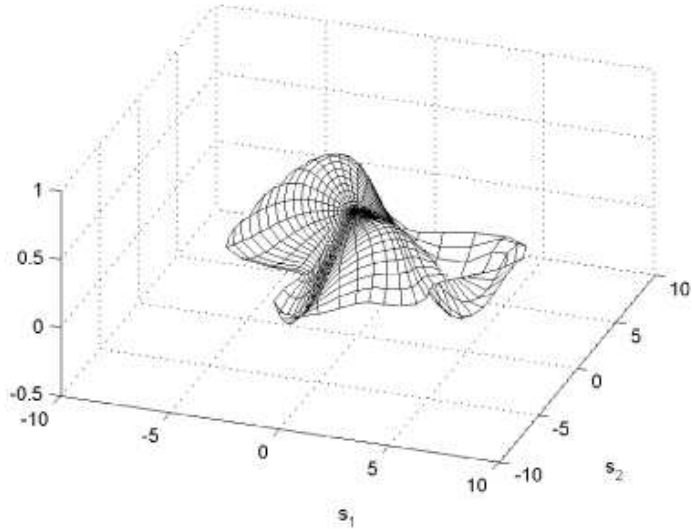}%
\caption{The approximating solution $u_{20}\left(  s,20\right)  $ for the true
solution $u\left(  s,20\right)  $ of (\ref{e208}) over the $\Omega$ of Figure
\ref{amoeba_region}}%
\label{amoeba_soln}%
\end{figure}
%EndExpansion
%TCIMACRO{\FRAME{ftbFU}{3.9998in}{3in}{0pt}{\Qcb{The error $\left\Vert u\left(
%\cdot,t\right)  -u_{20}\left(  \cdot,t\right)  \right\Vert _{\infty}$ for the
%true solution $u\left(  s,t\right)  $ of (\ref{e208}) over the $\Omega$ of
%Figure \ref{amoeba_region}}}{\Qlb{amoeba_error_over_time}}%
%{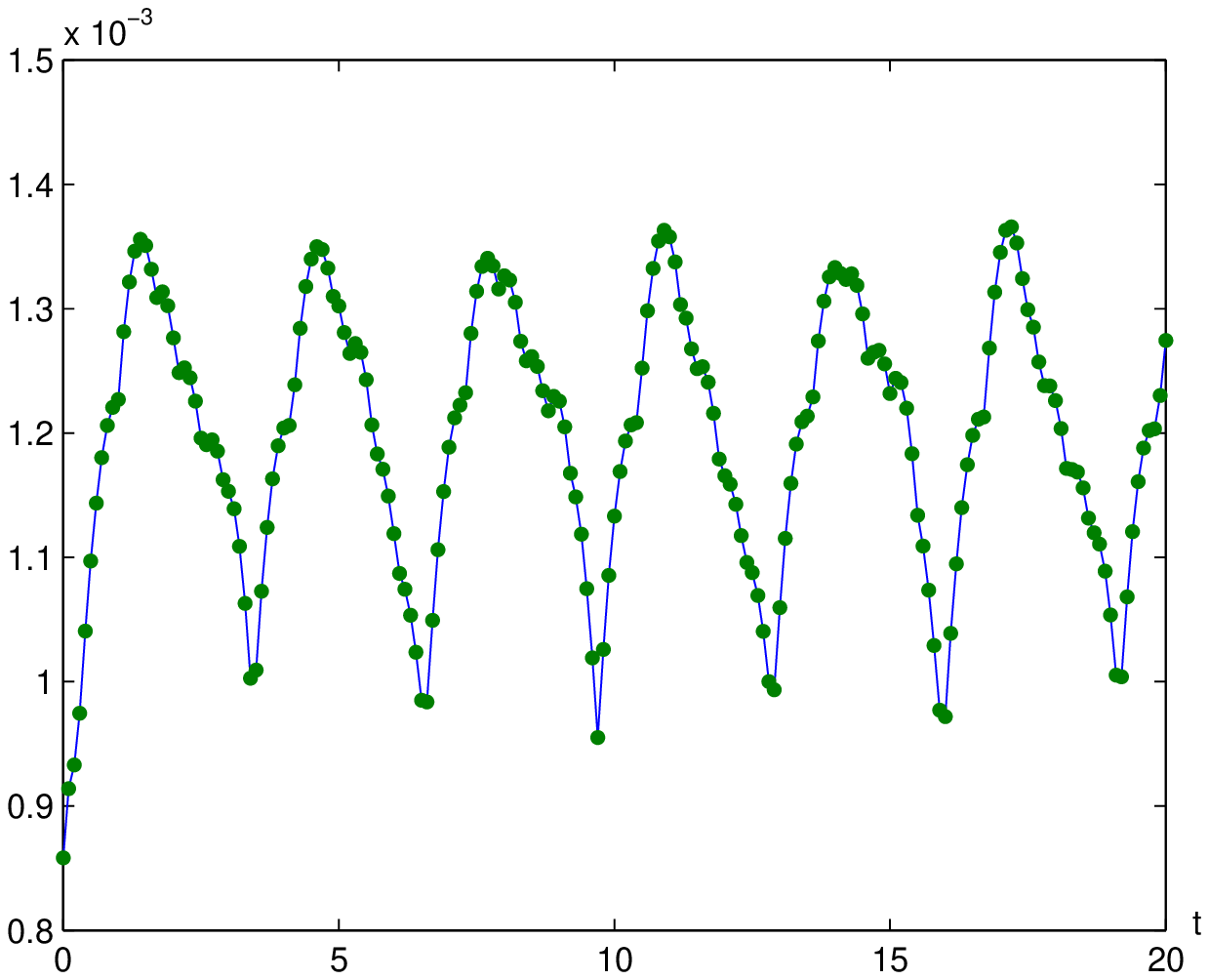}{\special{ language "Scientific Word";
%type "GRAPHIC";  maintain-aspect-ratio TRUE;  display "USEDEF";
%valid_file "F";  width 3.9998in;  height 3in;  depth 0pt;
%original-width 5.8219in;  original-height 4.3708in;  cropleft "0";
%croptop "1";  cropright "1";  cropbottom "0";
%filename 'amoeba_error_over_time.eps';file-properties "XNPEU";}}}%
%BeginExpansion
\begin{figure}[tb]%
\centering
\includegraphics[
height=3in,
width=3.9998in
]%
{amoeba_error_over_time.eps}%
\caption{The error $\left\Vert u\left(  \cdot,t\right)  -u_{20}\left(
\cdot,t\right)  \right\Vert _{\infty}$ for the true solution $u\left(
s,t\right)  $ of (\ref{e208}) over the $\Omega$ of Figure \ref{amoeba_region}}%
\label{amoeba_error_over_time}%
\end{figure}
%EndExpansion
%TCIMACRO{\FRAME{ftbFU}{3.9998in}{3in}{0pt}{\Qcb{$n$ vs. $\max\limits_{0\leq
%t\leq20}\left\Vert u\left(  \cdot,t\right)  -u_{n}\left(  \cdot,t\right)
%\right\Vert _{\infty}$ for the $\Omega$ of Figure \ref{amoeba_region}}%
%}{\Qlb{amoeba_error_over_deg}}{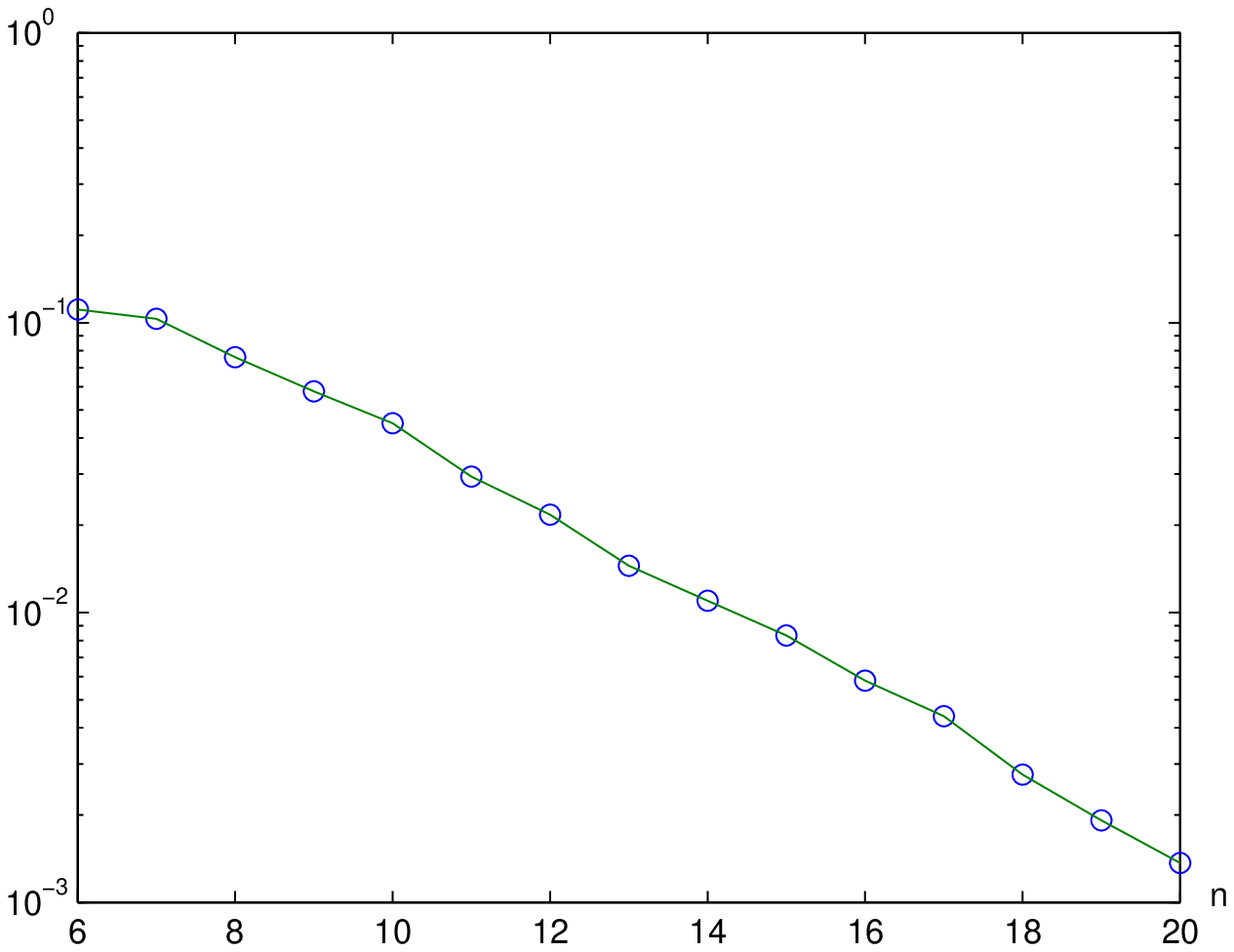}%
%{\special{ language "Scientific Word";  type "GRAPHIC";
%maintain-aspect-ratio TRUE;  display "USEDEF";  valid_file "F";
%width 3.9998in;  height 3in;  depth 0pt;  original-width 5.8219in;
%original-height 4.3708in;  cropleft "0";  croptop "1";  cropright "1";
%cropbottom "0";
%filename 'amoeba_error_over_deg.eps';file-properties "XNPEU";}}}%
%BeginExpansion
\begin{figure}[tb]%
\centering
\includegraphics[
height=3in,
width=3.9998in
]%
{amoeba_error_over_deg.eps}%
\caption{$n$ vs. $\max\limits_{0\leq t\leq20}\left\Vert u\left(
\cdot,t\right)  -u_{n}\left(  \cdot,t\right)  \right\Vert _{\infty}$ for the
$\Omega$ of Figure \ref{amoeba_region}}%
\label{amoeba_error_over_deg}%
\end{figure}
%EndExpansion

\subsection{ A three-dimensional example}

Here we will study one domain $\Omega$ which we investigated already in a
previous article for the purpose of analyzing the spectral method for
Dirichlet problems; see \cite{ach2008}. The domain has the advantage that the
transformation $\Phi$ is known throughout $\mathbb{B}_{3}$ and even the
inverse transformation $\Psi$ is known explicitly. The knowledge of $\Psi$ is
not necessary for the use of the spectral method but makes the construction of
an explicit solution easier. The mapping $\Phi:\overline{\mathbb{B}}%
_{3}\mapsto\overline{\Omega}$, $(s_{1},s_{2},s_{3})=\Phi(x_{1},x_{2},x_{3})$
is given by
\begin{equation}%
\begin{array}
[c]{rcl}%
s_{1} & = & x_{1}-x_{2}+ax_{1}^{2}\\
s_{2} & = & x_{1}+x_{2}\\
s_{3} & = & 2x_{3}+bx_{3}^{2}%
\end{array}
\label{e1008}%
\end{equation}
where $0<a,b<1$ are two parameters.%
%TCIMACRO{\FRAME{ftbFU}{3.9998in}{3in}{0pt}{\Qcb{The surface of $\Omega$ with
%parameters $a=0.7$ and $b=0.9$; see (\ref{e1008})}}{\Qlb{SurfaceOmega3D}%
%}{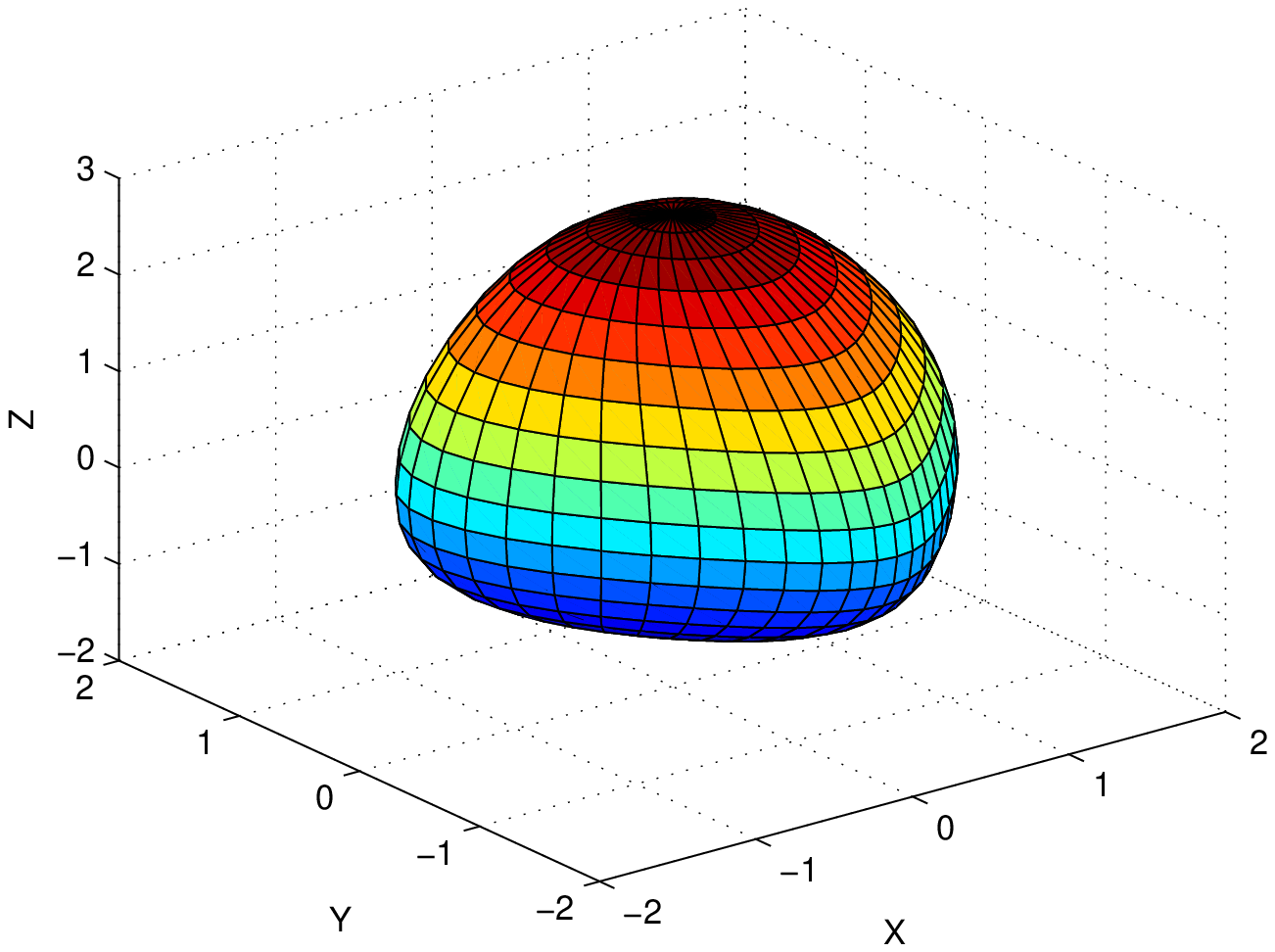}{\special{ language "Scientific Word";  type "GRAPHIC";
%maintain-aspect-ratio TRUE;  display "USEDEF";  valid_file "F";
%width 3.9998in;  height 3in;  depth 0pt;  original-width 5.8219in;
%original-height 4.3708in;  cropleft "0";  croptop "1";  cropright "1";
%cropbottom "0";  filename 'f1.eps';file-properties "XNPEU";}}}%
%BeginExpansion
\begin{figure}[tb]%
\centering
\includegraphics[
height=3in,
width=3.9998in
]%
{}%
\caption{The surface of $\Omega$ with parameters $a=0.7$ and $b=0.9$; see
(\ref{e1008})}%
\label{SurfaceOmega3D}%
\end{figure}
%EndExpansion%
%TCIMACRO{\FRAME{ftbFU}{3.9998in}{3in}{0pt}{\Qcb{The surface of $\Omega$, see
%(\ref{e1008}), seen from the $z$-axis}}{\Qlb{SurfaceOmega3D_V2}}%
%{}{\special{ language "Scientific Word";  type "GRAPHIC";
%maintain-aspect-ratio TRUE;  display "USEDEF";  valid_file "F";
%width 3.9998in;  height 3in;  depth 0pt;  original-width 5.8219in;
%original-height 4.3708in;  cropleft "0";  croptop "1";  cropright "1";
%cropbottom "0";  filename 'f2.eps';file-properties "XNPEU";}}}%
%BeginExpansion
\begin{figure}[tb]%
\centering
\includegraphics[
height=3in,
width=3.9998in
]%
{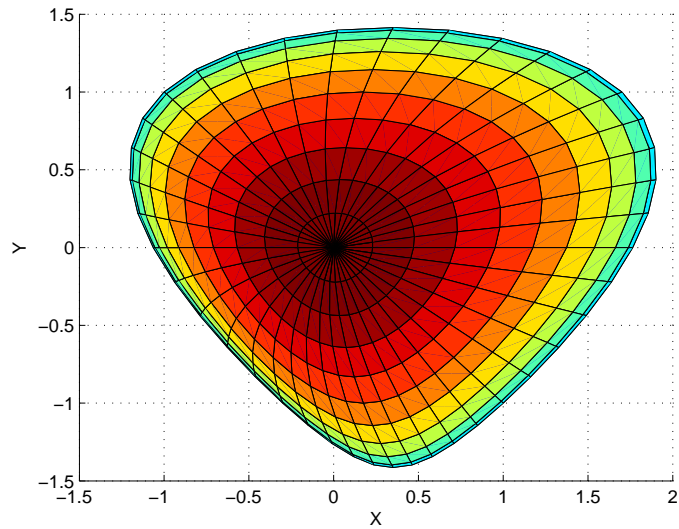}%
\caption{The surface of $\Omega$, see (\ref{e1008}), seen from the $z$-axis}%
\label{SurfaceOmega3D_V2}%
\end{figure}
%EndExpansion
Figures \ref{SurfaceOmega3D}, \ref{SurfaceOmega3D_V2} show an example of the
surface of $\Omega$ from two different angles. The inverse $\Psi
:\overline{\Omega}\mapsto\overline{\mathbb{B}}_{3}$ is given by
\begin{align*}
x_{1}  & =\frac{1}{a}\left[  -1+\sqrt{1+a(s_{1}+s_{2})}\right] \\
x_{2}  & =\frac{1}{a}\left[  as_{2}+1-\sqrt{1+a(s_{1}+s_{2})}\right] \\
x_{3}  & =\frac{1}{b}\left[  -1+\sqrt{1+bs_{3}}\right]
\end{align*}
Furthermore the Jacobian for $\Phi$ is given by%
\[
J(x)=\left(
\begin{array}
[c]{ccc}%
1+2ax_{1} & -1 & 0\\
1 & 1 & 0\\
0 & 0 & 2+2bx_{3}%
\end{array}
\right)
\]
with determinant
\[
\det(J(x))=4(1+ax_{1})(1+bx_{3}).
\]
This allows us also to calculate $\widetilde{A}$, see (\ref{e202}), directly
\[
\widetilde{A}(x)=\left(
\begin{array}
[c]{ccc}%
\dfrac{1}{2(1+ax_{1})^{2}} & \dfrac{ax_{1}}{2(1+ax_{1})^{2}} & 0\\
\dfrac{ax_{1}}{2(1+ax_{1})^{2}} & \dfrac{1+ax_{1}+2a^{2}x_{1}^{2}}%
{2(1+ax_{1})^{2}} & 0\\
0 & 0 & \dfrac{1}{4(1+bx_{3})^{2}}%
\end{array}
\right)
\]
Again we use the spectral method to solve (\ref{e200}) where $f$ is given by
(\ref{e1006}) and (\ref{e1007}). As a particular example for solving
(\ref{e200}), let
\begin{align}
f_{1}(s,t,z) &  =e^{-z}\cos(\pi t),\nonumber\\
u(s,t) &  =(1-x_{1}^{2}-x_{2}^{2}-x_{3}^{2})\cos(t+0.05\pi s_{1}s_{2}%
s_{3})\label{zz1}%
\end{align}
where $(x_{1},x_{2},x_{3})=\Psi(s_{1},s_{2},s_{3})$ with $a=0.7$ and $b=0.9$.
Numerical results are given in Figures \ref{error_3d}, \ref{n_error_3d}.
Figure \ref{cond_z54} seems to indicate that the relation (\ref{e212}) for the
condition number of the Jacobian $G_{n}^{-1}B_{n}$ is also valid in the three
dimensional case.%

%TCIMACRO{\FRAME{ftbFU}{3.9998in}{3in}{0pt}{\Qcb{The error $\left\Vert u\left(
%\cdot,t\right)  -u_{12}\left(  \cdot,t\right)  \right\Vert _{\infty}$ for the
%true solution $u\left(  s,t\right)  $ of (\ref{zz1}) over the $\Omega$ of
%Figure \ref{SurfaceOmega3D}}}{\Qlb{error_3d}}{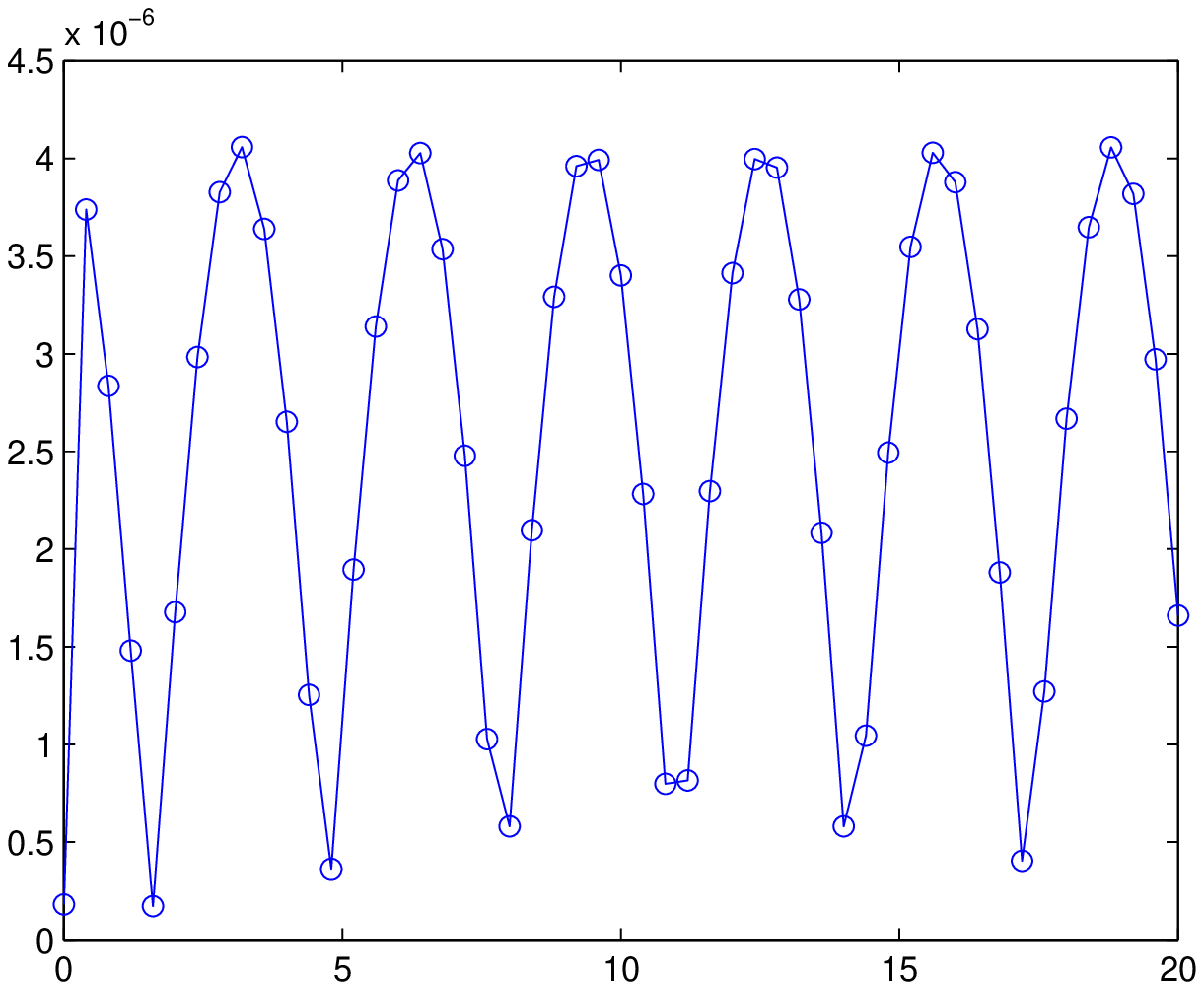}%
%{\special{ language "Scientific Word";  type "GRAPHIC";
%maintain-aspect-ratio TRUE;  display "USEDEF";  valid_file "F";
%width 3.9998in;  height 3in;  depth 0pt;  original-width 5.8219in;
%original-height 4.3708in;  cropleft "0";  croptop "1";  cropright "1";
%cropbottom "0";  filename 'zzfig4.eps';file-properties "XNPEU";}}}%
%BeginExpansion
\begin{figure}[tb]%
\centering
\includegraphics[
height=3in,
width=3.9998in
]%
{zzfig4.eps}%
\caption{The error $\left\Vert u\left(  \cdot,t\right)  -u_{12}\left(
\cdot,t\right)  \right\Vert _{\infty}$ for the true solution $u\left(
s,t\right)  $ of (\ref{zz1}) over the $\Omega$ of Figure \ref{SurfaceOmega3D}}%
\label{error_3d}%
\end{figure}
%EndExpansion
%

%TCIMACRO{\FRAME{ftbFU}{3.9998in}{3in}{0pt}{\Qcb{$n$ vs. $\max\limits_{0\leq
%t\leq20}\left\Vert u\left(  \cdot,t\right)  -u_{n}\left(  \cdot,t\right)
%\right\Vert _{\infty}$ over the $\Omega$ of Figure \ref{SurfaceOmega3D}}%
%}{\Qlb{n_error_3d}}{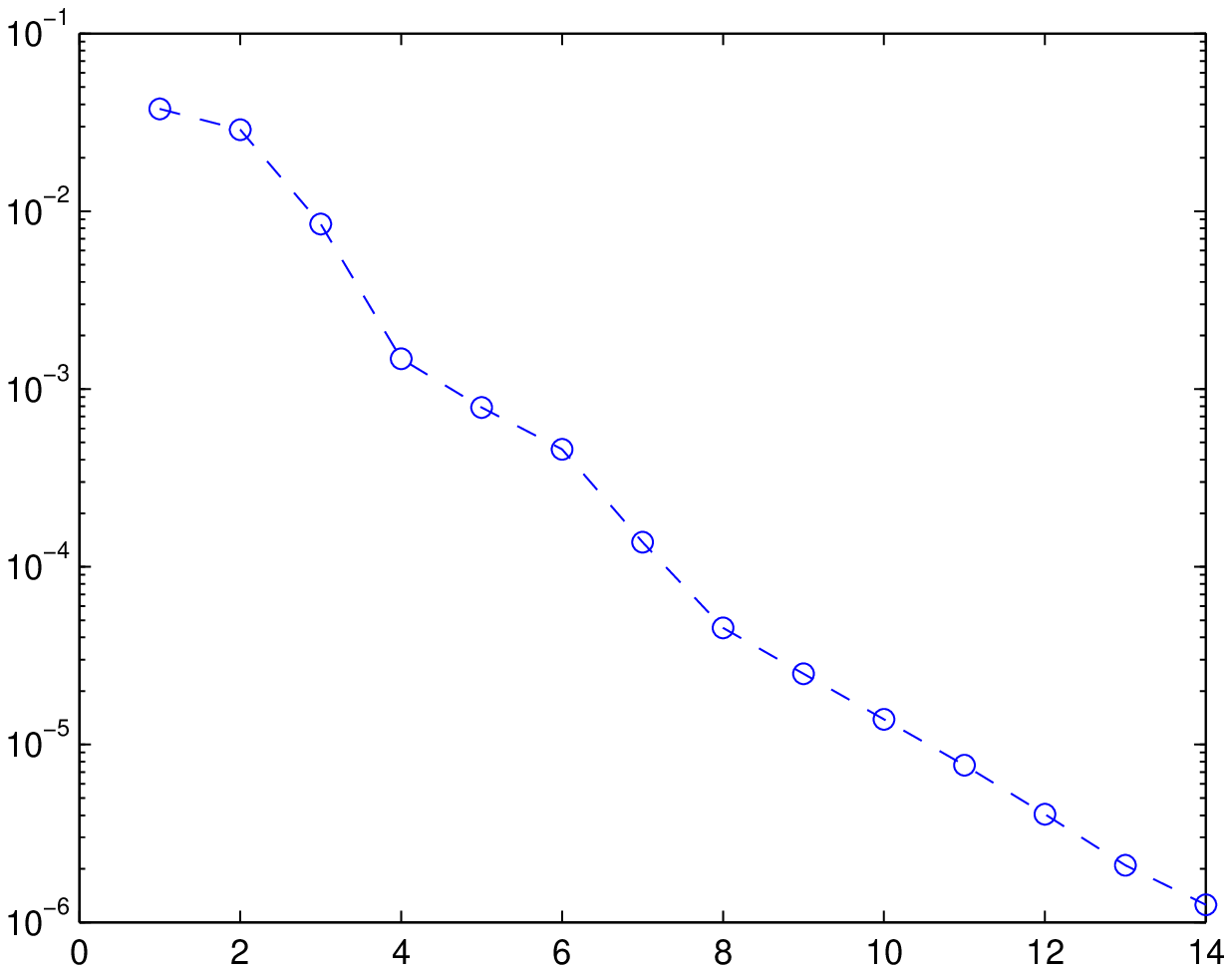}{\special{ language "Scientific Word";
%type "GRAPHIC";  maintain-aspect-ratio TRUE;  display "USEDEF";
%valid_file "F";  width 3.9998in;  height 3in;  depth 0pt;
%original-width 5.8219in;  original-height 4.3708in;  cropleft "0";
%croptop "1";  cropright "1";  cropbottom "0";
%filename 'zzfig5.eps';file-properties "XNPEU";}}}%
%BeginExpansion
\begin{figure}[tb]%
\centering
\includegraphics[
height=3in,
width=3.9998in
]%
{zzfig5.eps}%
\caption{$n$ vs. $\max\limits_{0\leq t\leq20}\left\Vert u\left(
\cdot,t\right)  -u_{n}\left(  \cdot,t\right)  \right\Vert _{\infty}$ over the
$\Omega$ of Figure \ref{SurfaceOmega3D}}%
\label{n_error_3d}%
\end{figure}
%EndExpansion%
%TCIMACRO{\FRAME{ftbFU}{3.9998in}{3in}{0pt}{\Qcb{$\log\left(  N_{n}^{2}\right)
%$ vs. $\log\left(  \operatorname*{cond}\left(  G_{n}^{-1}B_{n}\right)
%\right)  $ for the $\Omega$ of Figure \ref{SurfaceOmega3D}}}{\Qlb{cond_z54}%
%}{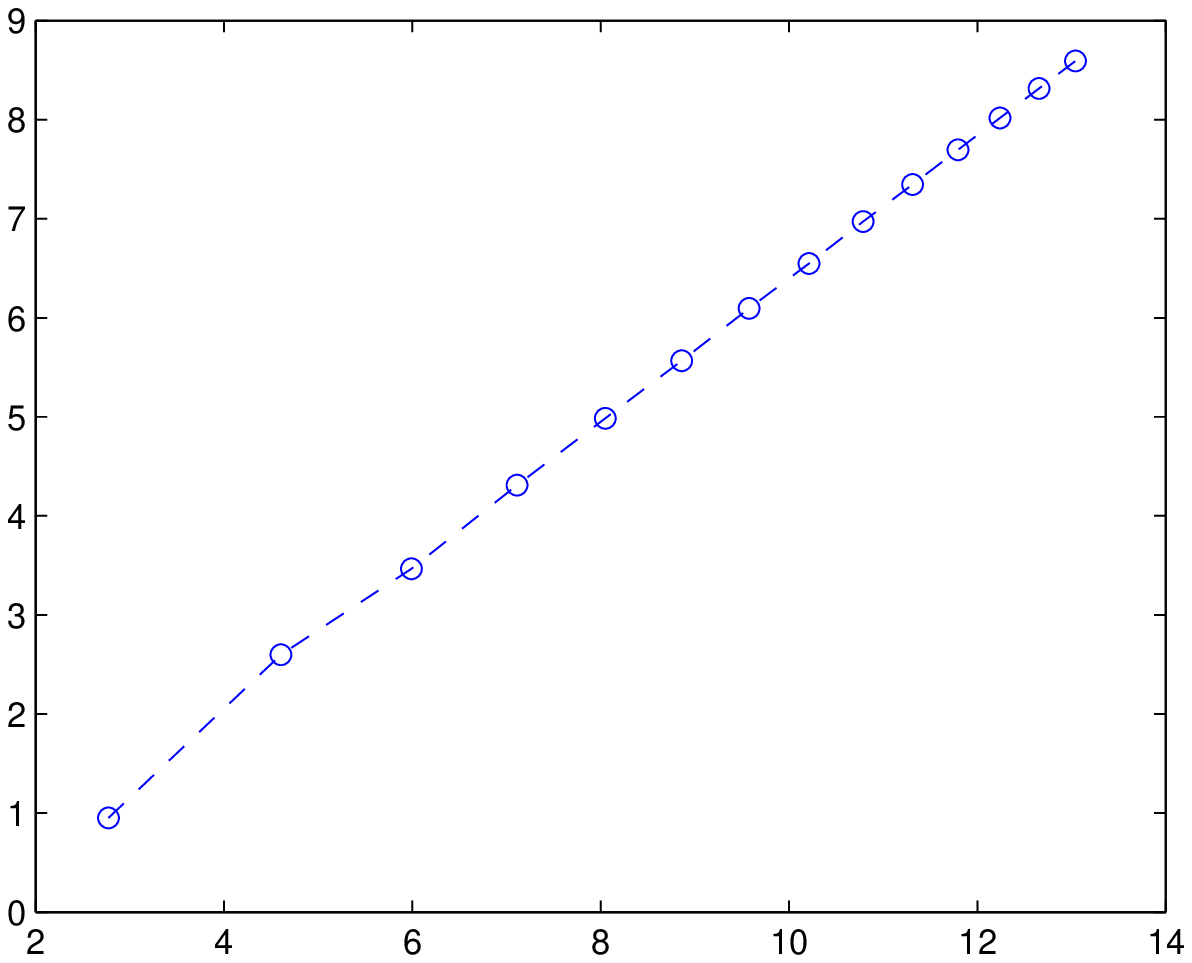}{\special{ language "Scientific Word";  type "GRAPHIC";
%maintain-aspect-ratio TRUE;  display "USEDEF";  valid_file "F";
%width 3.9998in;  height 3in;  depth 0pt;  original-width 5.8219in;
%original-height 4.3708in;  cropleft "0";  croptop "1";  cropright "1";
%cropbottom "0";  filename 'zzfig6.eps';file-properties "XNPEU";}}}%
%BeginExpansion
\begin{figure}[tb]%
\centering
\includegraphics[
height=3in,
width=3.9998in
]%
{zzfig6.eps}%
\caption{$\log\left(  N_{n}^{2}\right)  $ vs. $\log\left(
\operatorname*{cond}\left(  G_{n}^{-1}B_{n}\right)  \right)  $ for the
$\Omega$ of Figure \ref{SurfaceOmega3D}}%
\label{cond_z54}%
\end{figure}
%EndExpansion

\end{document}